\documentclass[12pt, a4paper, abstracton, bibliography=totoc]{scrartcl}
\pdfoutput=1

\usepackage{amsmath, amsthm, amsfonts, amssymb}
\usepackage[utf8]{inputenc}
\usepackage[T1]{fontenc}
\usepackage{color}
\usepackage{slashed} 
\usepackage[all, cmtip]{xy}
\usepackage{hyperref} 

\let\counterwithout\relax

\usepackage{chngcntr} 

\usepackage{graphicx} 
\usepackage[english]{babel}
\usepackage{microtype}
\usepackage{bm} 
\usepackage{xfrac} 
\usepackage{mathtools} 

\newcommand{\IN}{\mathbb{N}}
\newcommand{\IZ}{\mathbb{Z}}

\newcommand{\IR}{\mathbb{R}}
\newcommand{\IC}{\mathbb{C}}

\newcommand{\diam}{\operatorname{diam}}

\newcommand{\id}{\mathrm{id}}

\newcommand{\HCocont}{H \! C^\mathrm{cont}}
\newcommand{\HHocont}{H \! H^\mathrm{cont}}

\newcommand{\HC}{H \! C}

\newcommand{\HH}{H \! H}

\newcommand{\Frechet}{Fr\'{e}chet }

\newcommand{\RD}{\mathrm{RD}}

\newcommand{\RNum}[1]{\uppercase\expandafter{\romannumeral #1\relax}}

\DeclareMathOperator{\hatotimes}{\hat{\otimes}}
\DeclareMathOperator{\barotimes}{\bar{\otimes}}

\newtheorem{thm}{Theorem}[section]
\newtheorem*{thm*}{Theorem}
\newtheorem*{mainthm*}{Main Theorem}
\newtheorem{cor}[thm]{Corollary}
\newtheorem*{cor*}{Corollary}
\newtheorem{lem}[thm]{Lemma}
\newtheorem{prop}[thm]{Proposition}

\theoremstyle{definition}
\newtheorem{rem-alt}[thm]{Remark}
\newtheorem*{rem*}{Remark}
\newtheorem{ex-alt}[thm]{Example}

\newtheorem*{example*}{Example}
\newtheorem*{examples*}{Examples}
\newtheorem{defn-alt}[thm]{Definition}
\newtheorem{nota-alt}[thm]{Notation}

\newenvironment{defn}    
{%
	\pushQED{\qed}\begin{defn-alt}}
	{\popQED\end{defn-alt}}
	
\newenvironment{ex}    
{%
	\pushQED{\qed}\begin{ex-alt}}
	{\popQED\end{ex-alt}}

\newenvironment{rem}    
{%
	\pushQED{\qed}\begin{rem-alt}}
	{\popQED\end{rem-alt}}

{%
	\pushQED{\qed}\begin{nota-alt}}
	{\popQED\end{nota-alt}}

\numberwithin{equation}{section}
\counterwithout{footnote}{section}

\makeatletter
\def\blfootnote{\gdef\@thefnmark{}\@footnotetext}
\makeatother

\begin{document}

\title{On the Hochschild homology of\\ $\boldsymbol{\ell^1}$-rapid decay group algebras}
\author{
Alexander Engel
}
\date{}

\maketitle

\vspace*{-3.0\baselineskip}
\begin{center}
\footnotesize{
\textit{
Fakult\"{a}t f\"{u}r Mathematik\\
Universit\"{a}t Regensburg\\
93040 Regensburg, GERMANY\\
\href{mailto:alexander.engel@mathematik.uni-regensburg.de}{alexander.engel@mathematik.uni-regensburg.de}
}}
\end{center}

\begin{abstract}
We show that for many semi-hyperbolic groups the decomposition into conjugacy classes of the Hochschild homology of the $\ell^1$-rapid decay group algebra is injective.
\end{abstract}

\tableofcontents

\section{Introduction}

Let $G$ be a finitely generated group and denote by $\IC G$ its complex group ring. Choosing any finite, symmetric generating set for $G$, we get a word-length norm on it and can then define the $\ell^1$-rapid decay group algebra $\ell^1_\infty G$, which is the closure of $\IC G$ under a certain family of norms (see Definition~\ref{defn_ell1_RD_algebra} for details).

The \Frechet algebra $\ell^1_\infty G$ plays an important role in the isomorphism conjectures and related questions: its $K$-theory is the target of the Bost assembly map and is also an intermediate step in the Baum--Connes conjecture \cite{lafforgue}, and its Hochschild and cyclic homology are related to the Bass, Burghelea and idempotent conjectures \cite{ji_ogle_ramsey}.

In this paper we are interested in the Hochschild homology of $\ell^1_\infty G$. In the case of the group ring $\IC G$ Burghelea \cite{burghelea} computed the Hochschild homology completely: $\HH_n(\IC G) \cong \bigoplus_{[g] \in \langle G \rangle} H_n(Z_g;\IC)$, where on the right hand side the sum runs over all the conjugacy classes of $G$ and $Z_g$ is the centralizer of $g$. In the case of $\ell^1_\infty G$ we have a map
\begin{equation}
\HHocont_\ast(\ell^1_\infty G) \to \prod_{x \in \langle G \rangle} \HHocont_\ast(\ell^1_\infty G)_x\label{eq_inj_1_intro}
\end{equation}
into the product over the conjugacy classes, but injectivity might be lost now.\footnote{We do not have surjectivity since on the right hand side we give up any control over the norms across the different conjugacy classes.} This was already noticed in \cite{ji_ogle_ramsey}, where a computation of the single factors was carried out. The main result of this paper is to prove injectivity of the map \eqref{eq_inj_1_intro} for as many groups as possible.\footnote{Note that it is currently an open problem to construct a group for which \eqref{eq_inj_1_intro} is not injective.}

\begin{mainthm*}
Let $G$ be a finitely generated group from one of the following classes:
\begin{enumerate}
\item hyperbolic groups,
\item central extensions of hyperbolic groups,
\item Artin groups of extra-large type,
\item right-angled Artin groups, or
\item hyperbolic relative to a finite collection of groups from the previous four classes.
\end{enumerate}
Then the map \eqref{eq_inj_1_intro} is injective.
\end{mainthm*}

\paragraph{Acknowledgements} I thank Benson Farb for helpful answers about his thesis \cite{farb_thesis} and for clarifications about semi-hyperbolic and bicombable groups, and Süleyman Ka\v{g}an Samurka\c{s} for helpful discussions. Furthermore, I thank Crichton Ogle for very valuable discussions and the resulting improvements of this paper (see especially Remark~\ref{rem_ogle}).

Finally, I thank the anonymous referee for his or her comments.

\section{Review of the homology of complex group rings}
\label{sec_review}

Since $\ell^1_\infty G$ is a completion of $\IC G$, we first have to recall the computation of the Hochschild homology $\HH_*(\IC G)$ before we modify these computations to prove our main theorem from the introduction. But before we do that we have to recall the definition of Hochschild homology itself.

\begin{defn}
Let $A$ be an algebra over $\IC$.

The \emph{Hochschild homology} $\HH_\ast(A)$ is defined as the homology of the complex
\[\ldots \longrightarrow C_n(A) \stackrel{b}\longrightarrow C_{n-1}(A) \stackrel{b}\longrightarrow \ldots \stackrel{b}\longrightarrow C_0(A) \longrightarrow 0,\]
where $C_n(A) = A^{\otimes (n+1)}$
and $b$ is the Hochschild boundary operator
\begin{align*}
b & (a_0 \otimes \cdots \otimes a_n)\\
& = \sum_{j=0}^{n-1} (-1)^j a_0 \otimes \cdots \otimes a_j a_{j+1} \otimes \cdots \otimes a_n + (-1)^n a_n a_0 \otimes a_1 \cdots \otimes a_{n-1}.\qedhere
\end{align*}
\end{defn}


The complete computation of the Hochschild and cyclic homology of group rings is due to Burghelea \cite{burghelea}. One can also consult Khalkhali \cite[Example 3.10.3]{khalkhali_basic} or Loday \cite[Chapter 7.4]{loday_cyclic_hom} or Connes \cite[Example 3.2.$\gamma$]{connes_bible}. We will state below only the result for Hochschild homology, since this is the one we need.

We write $\langle G \rangle$ for the conjugacy classes of $G$, and for an element $g \in G$ we write $Z_g < G$ for its centralizer.

\begin{thm}[{Burghelea \cite{burghelea}}]\label{thm2435ter243}
For all $n \in \IN_0$ we have
\begin{equation}
\label{eq_sdfjsdfjk43}
\HH_n(\IC G) \cong \bigoplus_{[g] \in \langle G \rangle} H_n(Z_g;\IC).
\end{equation}
\end{thm}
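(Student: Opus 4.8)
The plan is to compute $\HH_\ast(\IC G)$ directly from the Hochschild complex by exploiting the grading of the group ring by conjugacy classes. First I would observe that $C_n(\IC G) = (\IC G)^{\otimes(n+1)}$ has a $\IC$-basis given by tuples $(g_0, g_1, \dots, g_n)$ of group elements, and that the Hochschild boundary $b$ preserves the \emph{total product} $g_0 g_1 \cdots g_n \in G$ up to conjugacy: in each term of $b(a_0 \otimes \cdots \otimes a_n)$ the resulting product is conjugate to $g_0 g_1 \cdots g_n$ (the cyclic term $(-1)^n a_n a_0 \otimes \cdots$ produces $g_n g_0 g_1 \cdots g_{n-1}$, which is the conjugate of $g_0 \cdots g_n$ by $g_0^{-1}$). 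Hence the complex $C_\ast(\IC G)$ splits as a direct sum over $[g] \in \langle G \rangle$ of subcomplexes $C_\ast(\IC G)_{[g]}$ spanned by tuples whose product lies in the conjugacy class $[g]$, and therefore $\HH_n(\IC G) \cong \bigoplus_{[g]} H_n\big(C_\ast(\IC G)_{[g]}\big)$. This reduces the theorem to identifying the homology of each piece with $H_n(Z_g;\IC)$.

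Next I would fix a representative $g$ of a conjugacy class and identify $C_\ast(\IC G)_{[g]}$ with a standard complex computing group homology. The set of basis tuples $(g_0, \dots, g_n)$ with $g_0 \cdots g_n \in [g]$ can be reparametrized: the last coordinate is determined by the product constraint up to the choice of conjugator, so one gets a bijection with $G^{\times n} \times (\text{the conjugacy class } [g]) \cong G^{\times n} \times (G/Z_g)$. Under this identification the complex $C_\ast(\IC G)_{[g]}$ becomes (isomorphic to) the complex $\IC[G^{\times \ast}] \otimes_{\IC Z_g} \IC$ — i.e. the bar-type resolution of $\IC$ over $\IC Z_g$ induced up and then coinvariants taken — whose homology is by definition $H_\ast(Z_g; \IC)$. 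Concretely, one checks that the Hochschild differential $b$, rewritten in the new coordinates, matches the bar differential for $Z_g$; this is a direct but slightly fiddly reindexing computation. A clean way to organize it is to use the cyclic-invariance of Hochschild homology to first replace $C_\ast(\IC G)$ by the normalized or "cyclic bar" model, where the matching with group homology is most transparent (this is exactly the route taken in Loday \cite[Chapter 7.4]{loday_cyclic_hom}).

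An alternative, and perhaps cleaner, approach I would consider: use the general identification $\HH_\ast(\IC G) \cong \HH_\ast(\IC G, \IC G) = \operatorname{Tor}^{\IC G \otimes \IC G^{\op}}_\ast(\IC G, \IC G)$ and decompose the bimodule $\IC G$ (with the conjugation-twisted structure, via $g \mapsto g \otimes 1 - 1 \otimes g^{-1}$ type arguments, i.e. passing from the outer to the adjoint action) as $\IC G \cong \bigoplus_{[g]} \IC[[g]]$ as a module over $\IC G$ acting by conjugation. Each summand $\IC[[g]] \cong \IC[G/Z_g] \cong \IC G \otimes_{\IC Z_g} \IC$ is an induced module, and Shapiro's lemma converts the resulting $\operatorname{Tor}$ over $\IC G$ into $\operatorname{Tor}$ over $\IC Z_g$, i.e. into $H_\ast(Z_g; \IC)$. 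This packages the combinatorics into standard homological algebra and avoids an explicit differential chase.

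The main obstacle — really the only nontrivial point — is the bookkeeping in the change of coordinates that matches the Hochschild differential with the group-homology (bar) differential: one has to verify that the cyclic "wrap-around" term of $b$ corresponds correctly to the conjugation action of $Z_g$, and that the degeneracy in the last coordinate is handled consistently. Everything else (the splitting over conjugacy classes, the final identification of each piece's homology) is formal. Since the statement is classical, I would in practice cite \cite{burghelea} (or \cite{loday_cyclic_hom}) for the details and only sketch the conjugacy-class splitting, which is the conceptual heart and the part directly used in what follows.
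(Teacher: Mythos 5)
Your proposal is correct, and its first half (the splitting of the Hochschild complex over conjugacy classes, because $b$ preserves the conjugacy class of $g_0\cdots g_n$) is exactly the paper's equation \eqref{eqr423erwe3}; only a small slip there: the wrap-around term $g_ng_0\cdots g_{n-1}$ is the conjugate of $g_0\cdots g_n$ by $g_n$, not by $g_0^{-1}$ (harmless for the argument). For the second half the paper and you diverge in packaging. You propose to identify $C_\bullet(\IC G)_{[g]}$ with $\IC[G^{\times(\bullet+1)}]\otimes_{\IC Z_g}\IC$ and invoke the fact that the bar resolution of $G$ is still a free $\IC Z_g$-resolution of $\IC$, or, even more abstractly, to decompose $\IC G$ with the adjoint action as $\bigoplus_{[g]}\IC[G/Z_g]$ and apply Shapiro's lemma. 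Both versions are standard and correct (the first is literally the isomorphism $E_\bullet(G)\otimes_{\IC Z_h}\IC\cong C_\bullet(\IC G)_x$ via $\vartheta_h$ that the paper records in Section~\ref{sec_chain_homotopies}). The paper, however, deliberately takes the less elegant route of writing down an explicit retraction $\pi_h\colon C_\bullet(\IC G)_x\to C_\bullet(\IC Z_h)_{[h]}$ built from length-minimizing coset representatives, together with explicit chain homotopies $D_n$: this is not redundant bookkeeping but the whole point of Sections~\ref{sechgertet}--\ref{sec_chain_homotopies}, since these concrete formulas are what get norm estimates in Section~\ref{sec_homology_l1} and make the argument survive completion to $\ell^1_\infty G$. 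The Shapiro's-lemma argument proves Burghelea's theorem cleanly but produces no maps one could control polynomially, so while your proof of the stated theorem is fine, you would have to fall back to the explicit chain-level version to use it the way the paper does.
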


\subsection{Details of the computation \texorpdfstring{\RNum{1}}{I}: reduction to the centralizers}\label{sechgertet}

We denote by $C_\bullet(\IC G)$ the Hochschild complex of the group ring $\IC G$. For a conjugacy class $x \in \langle G \rangle$ we denote by $C_\bullet(\IC G)_x$ the $\IC$-linear span of the set $\{(g_0, \ldots, g_n)\colon g_0 \cdots g_n \in x\}$.\footnote{We are using here the isomorphism $\IC G \otimes \cdots \otimes \IC G \cong \IC (G \times \cdots \times G)$ to make sense of this.\label{footnote_iso_tensor_group_algebra}}
\label{page_footnote} Then $C_\bullet(\IC G)_x$ is a subcomplex of $C_\bullet(\IC G)$, and we have a splitting
\begin{equation}
\label{eqr423erwe3}
C_\bullet(\IC G) \cong \bigoplus_{x \in \langle G \rangle} C_\bullet(\IC G)_x
\end{equation}
of the Hochschild complex which is responsible for the corresponding splitting in \eqref{eq_sdfjsdfjk43}.

The proof of the following lemma is based on Ji \cite{ji_module, ji_nilpotency} and Nistor \cite{nistor}, and the basic idea for their arguments can be traced back to Burghelea \cite{burghelea}.

\begin{lem}\label{lemrtetrwre}
For all $h \in x$ the inclusion $C_\bullet(\IC Z_h)_{[h]} \to C_\bullet(\IC G)_x$ induces an isomorphism on Hochschild homology groups.
\end{lem}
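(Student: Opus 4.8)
The plan is to realise the homology of both complexes as the group homology $H_*(Z_h;\IC)$ and to recognise the inclusion as an instance of Shapiro's lemma. The key input, which is the algebraic content of the arguments of Ji \cite{ji_module,ji_nilpotency} and Nistor \cite{nistor} (going back to Burghelea \cite{burghelea}; see also Loday \cite[Ch.~7.4]{loday_cyclic_hom}), is the following. Write $\IC[K]^{\mathrm{ad}}$ for $\IC K$ equipped with the conjugation action $g\cdot a=gag^{-1}$ of $K$. Then there is an isomorphism $\HH_*(\IC K)\cong H_*\big(K;\IC[K]^{\mathrm{ad}}\big)$ which is natural in $K$ with respect to group homomorphisms and, because all the complexes and comparison maps in its construction split along the conjugacy class of the element $g_0\cdots g_n$, is compatible with the splitting \eqref{eqr423erwe3} and with the orbit decomposition $\IC[K]^{\mathrm{ad}}=\bigoplus_{y\in\langle K\rangle}\IC[y]$; hence it restricts to natural isomorphisms
\[
H_n\big(C_\bullet(\IC K)_y\big)\;\cong\;H_n\big(K;\IC[y]\big),\qquad y\in\langle K\rangle.
\]
Since $K$ acts transitively on $y$ with stabiliser $Z_k$ for any $k\in y$, one has $\IC[y]\cong\IC[K/Z_k]=\mathrm{Ind}_{Z_k}^K\IC$.

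I would then specialise this twice. With $K=G$ and $y=x$ it gives $H_n\big(C_\bullet(\IC G)_x\big)\cong H_n\big(G;\mathrm{Ind}_{Z_h}^G\IC\big)$. With $K=Z_h$ and $y$ the conjugacy class of $h$ in $Z_h$ it gives $H_n\big(C_\bullet(\IC Z_h)_{[h]}\big)\cong H_n(Z_h;\IC)$, because $h$ is central in $Z_h$, so that this conjugacy class is the singleton $\{h\}$ with trivial action and the centraliser of $h$ inside $Z_h$ is all of $Z_h$. Now $C_\bullet(\IC Z_h)_{[h]}\to C_\bullet(\IC G)_x$ is precisely the map on conjugacy-class summands induced by the algebra inclusion $\IC Z_h\hookrightarrow\IC G$, that is, by the group inclusion $\varphi\colon Z_h\hookrightarrow G$ together with the $Z_h$-equivariant map $\IC[\{h\}]\to\mathrm{Res}_{Z_h}\IC[x]$, $h\mapsto h$, which is nothing but the unit $\IC\to\mathrm{Res}_{Z_h}\mathrm{Ind}_{Z_h}^G\IC$ of the induction--restriction adjunction. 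By the naturality of the comparison isomorphism, the map induced on Hochschild homology is, under the above identifications, the change-of-groups homomorphism
\[
H_n(Z_h;\IC)\longrightarrow H_n\big(G;\mathrm{Ind}_{Z_h}^G\IC\big)
\]
attached to $\varphi$ and this unit.

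It remains to observe that this homomorphism is Shapiro's isomorphism. Pick a resolution $P_\bullet\to\IC$ of the trivial module by free $\IC G$-modules; since $\IC G$ is free as a $\IC Z_h$-module, restriction to $Z_h$ yields a resolution by free $\IC Z_h$-modules, and there is a natural identification of chain complexes $P_\bullet\otimes_{\IC Z_h}\IC=P_\bullet\otimes_{\IC G}\mathrm{Ind}_{Z_h}^G\IC$ under which the displayed homomorphism becomes the identity. Hence it is an isomorphism in every degree, which is the assertion.

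The only step carrying real content is the first one: producing the natural, conjugacy-class-graded isomorphism $\HH_*(\IC K)\cong H_*(K;\IC[K]^{\mathrm{ad}})$ at a level of precision that makes the naturality in $\varphi$ usable. Here I would follow Nistor: $\IC K$ is the crossed product $\IC\rtimes K$, its Hochschild homology is computed by the bar resolution over the enveloping algebra $\IC K\otimes(\IC K)^{\op}$, and via the diagonal embedding $K\hookrightarrow K\times K$ this reduces to the bar resolution of $\IC$ over $\IC K$ --- the reduction using the contractibility of the auxiliary complex built from the non-cyclic faces only, witnessed by the extra degeneracy $(g_0,\ldots,g_n)\mapsto(1,g_0,\ldots,g_n)$ --- and every complex and map occurring is functorial in $K$. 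I expect the main obstacle to be exactly the bookkeeping in this reduction, namely tracking the grading by the conjugacy class of $g_0\cdots g_n$ and the functoriality throughout. An essentially equivalent alternative is to construct a chain-homotopy inverse of the inclusion directly, using a transversal for $Z_h$ in $G$ together with the cyclic contracting homotopy, where the same bookkeeping reappears.
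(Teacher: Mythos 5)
Your argument is correct, and it takes a visibly different route from the paper's. The paper proves the lemma by writing down an explicit retraction $\pi_h\colon C_\bullet(\IC G)_x \to C_\bullet(\IC Z_h)_{[h]}$ built from a transversal $s$ for $Z_h\backslash G$ and the retraction $p_h(g)=gs(Z_hg)^{-1}$, checking $\pi_h\circ\iota_h=\id$ directly, and producing the chain homotopy $\iota_h\circ\pi_h\simeq\id$ in Section~\ref{sec_chain_homotopies} via the auxiliary complex $E_\bullet(G)\otimes_{\IC Z_h}\IC$ and the inductively defined operators $D_n$. You instead pass through the Mac Lane--Burghelea identification $\HH_\ast(\IC K)\cong H_\ast(K;\IC[K]^{\mathrm{ad}})$, its conjugacy-class grading, the identification $\IC[x]\cong\operatorname{Ind}_{Z_h}^G\IC$, and Shapiro's lemma; your identification of the inclusion with the change-of-groups map attached to the unit of the induction--restriction adjunction is the one step that genuinely needs care, and you handle it correctly via naturality. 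The two proofs rest on the same underlying fact --- freeness of $\IC G$ over $\IC Z_h$, i.e.\ the choice of transversal --- and the paper's diagram involving $E_\bullet(G)\otimes_{\IC Z_h}\IC$ is essentially the chain-level incarnation of your Shapiro argument, as you anticipate in your closing remark. What your version buys is conceptual economy: no formula verification, and the isomorphism statement comes for free from standard homological algebra. What the paper's version buys is the explicit chain-level data: the later sections (\ref{sec_l1_localized} and~\ref{sec_semihyp_product}) need the concrete formulas for $\pi_h$, for the composition \eqref{eq_full_comp}, and for the homotopies $D_n$ in order to verify continuity and polynomial norm estimates after $\ell^1$-rapid-decay completion. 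An abstract Shapiro-type argument alone would not deliver those quantitative bounds, so if you adopted your proof you would still have to extract the explicit maps (as you sketch in your last paragraph) before proceeding to Lemma~\ref{lem_inj_prod}.
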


\begin{proof}
We define a map $\pi_h\colon C_\bullet(\IC G)_x \to C_\bullet(\IC Z_h)_{[h]}$ which is an inverse on homology.

Let us define a map (which is in general not a homomorphism) $p_h\colon G \to Z_h$ by picking for each coset $y \in Z_h \backslash G$ a representative $s(y)$, i.e., $Z_h \cdot s(y) = y$, and then mapping $g$ to $g s(y)^{-1}$ if $g \in y$. It has the property $p_h(ag) = a p_h(g)$ for all $a \in Z_h$.

Let $(g_0, \ldots, g_n) \in C_\bullet(\IC G)_x$ be given. Then there is $r \in G$ such that $g_0 \cdots g_n = r^{-1} h r$, because $h \in x$. Then we set on generators
\begin{align}
\pi_h( & g_0, \ldots, g_n)\label{eq442442344}\\
& := (p_h(r g_0 \cdots g_n)^{-1} h p_h(r g_0), p_h(r g_0)^{-1} p_h(r g_0 g_1), \ldots, p_h(r g_0 \cdots g_{n-1})^{-1} p_h(r g_0 \cdots g_n)).\notag
\end{align}

One quickly checks that $\pi_h$ maps indeed into $C_\bullet(\IC Z_h)_{[h]}$. It is also well-defined in the sense that if we have two representations $g_0 \cdots g_n = r^{-1} h r = l^{-1} h l$, then $\pi_h( g_0, \ldots, g_n)$ is independent of the choice of $r$ or $l$ for the formula. This follows from the fact that in this situation we have $r = a l$ for some element $a \in Z_h$ and we have already noted above that $p_h(ag) = a p_h(g)$ for all $a \in Z_h$. But the map $\pi_h$ does in general depend on the choice of $p_h$.

A computation\footnote{When doing this keep in mind that at some point one has to apply \eqref{eq442442344} to $(g_n g_0, g_1, \ldots, g_{n-1})$ and we have now $g_n g_0 g_1 \cdots g_{n-1} = g_n r^{-1} h r g_n^{-1}$, i.e., one has to apply in this case \eqref{eq442442344} with a different choice of $r$, namely $r g_n^{-1}$.} shows that $\pi_h$ is a chain map, i.e., commutes with boundary operators, and therefore induces a map on homology groups.

We denote the inclusion $C_\bullet(\IC Z_h)_{[h]} \to C_\bullet(\IC G)_x$ by $\iota_h$. The composition $\pi_h \circ \iota_h$ is the identity map and the composition $\iota_h \circ \pi_h$ is chain homotopic to the identity (the latter will be shown in Section~\ref{sec_chain_homotopies}), which finishes this proof. 
\end{proof}

\subsection{Details of the computation \texorpdfstring{\RNum{2}}{II}: homology of the centralizers}
\label{sec_hom_centralizers}

To compute the group homology $H_\ast(G;\IC)$ we can either use the chain complex given by
\begin{align*}
C_n^\prime(G) & := G^n \text{ with the boundary operator}\\
\partial(g_1, \ldots, g_n) & := (g_2, \ldots, g_n) + \sum_{k=1}^{n-1} (-1)^k (g_1, \ldots, g_k g_{k+1}, \ldots g_n) + (-1)^n (g_1, \ldots, g_{n-1})
\end{align*}
or we use the chain complex given by (here $(-)_G$ denotes the $G$-coinvariants with respect to the diagonal action)
\begin{align}
C_n(G) & := (G^{n+1})_G \text{ with the boundary operator}\label{eq_bar_complex}\\
\partial(1, g_1, \ldots, g_n) & := (g_1, \ldots, g_n) + \sum_{k=1}^n (-1)^k (1, g_1, \ldots, \widehat{g_k}, \ldots, g_n).\notag
\end{align}
An isomorphism between these two chain complexes is given by the chain map
\begin{equation*}
\psi\colon C_n^\prime(G) \to C_n(G), \quad \psi(g_1, \ldots, g_n) := (1, g_1, g_1 g_2, \ldots, g_1 g_2 \cdots g_n)
\end{equation*}
and its inverse
\begin{equation*}
\psi^{-1}\colon C_n(G) \to C_n^\prime(G), \quad \psi^{-1}(1, g_1, \ldots, g_n) = (g_1, g_1^{-1} g_2, g_2^{-1} g_3, \ldots, g_{n-1}^{-1} g_n).
\end{equation*}

Recall the notation from Section~\ref{sechgertet}. We have a chain map $\phi_g\colon C_\bullet^\prime(Z_g) \to C_\bullet(\IC Z_g)_{[g]}$ which is in degree $n$ on generators given by $(g_1, \ldots, g_n) \mapsto ((g_1 \cdots g_n)^{-1}g, g_1, \ldots, g_n)$. Its inverse is given by $(g_0, \ldots, g_n) \mapsto (g_1, \ldots, g_n)$. This shows on the nose that $\phi_g$ induces an isomorphism on homology groups $H_\ast(Z_g;\IC) \cong \HH_\ast(Z_g)_{[g]}$, where we write $\HH_\ast(Z_g)_{[g]}$ for the homology of the complex $C_\bullet(\IC Z_g)_{[g]}$. Combined with Lemma~\ref{lemrtetrwre} and \eqref{eqr423erwe3} we therefore deduce the isomorphism \eqref{eq_sdfjsdfjk43} for Hochschild homology.

For every $x \in \langle G\rangle$ and $h \in x$ the formula for the composition
\begin{equation}
\label{eq_full_comp}
C_\bullet(\IC G)_{x} \xrightarrow{\pi_h} C_\bullet(\IC Z_h)_{[h]} \xrightarrow{\phi_h^{-1}} C_n^\prime(Z_h) \xrightarrow{\psi} C_n(Z_h)
\end{equation}
is as follows: given a generator $(g_0, \ldots, g_n)$ with $g_0 \cdots g_n = r^{-1} g r$, its image under the above composition is the equivariant chain having the value $1$ on the orbit of
\begin{equation}
\label{eq_formula_iso_homology}
( p_h(r g_0), p_h(r g_0 g_1), \ldots, p_h(r g_0 \cdots g_n) ).
\end{equation}

\subsection{Details of the computation \texorpdfstring{\RNum{3}}{III}: the chain homotopies}
\label{sec_chain_homotopies}

In this section we are following the presentation of Ji \cite{ji_nilpotency}, which is itself based on work of Nistor \cite{nistor}.

We consider the chain complex $E_\bullet(G)$ with the chain groups $E_n(G) := G^{n+1}$ and the boundary operator
\[\partial(g_0, \ldots, g_n) := (g_1, \ldots, g_n) + \sum_{k=1}^n (-1)^k (g_0, g_1, \ldots, \widehat{g_k}, \ldots, g_n),\]
i.e., the non-equivariant version of \eqref{eq_bar_complex}.

Let $x$ be a conjugacy class of $G$ and let $h \in x$. Let us denote by $i^E\colon E_\bullet(Z_h) \to E_\bullet(G)$ the inclusion map. To define a chain homotopy inverse $p^E$ to it, recall first the definition of the map $p_h\colon G \to Z_h$ from the beginning of the proof of Lemma~\ref{lemrtetrwre}. Then $p^E$ is the extension to $E_\bullet(G)$ of it, i.e., on generators we have
\[p_h(g_0, \ldots, g_n) := (p_h(g_0), \ldots, p_h(g_n)).\]

To define the chain homotopy from $i_h^E p_h^E$ to the identity on $E_\bullet(G)$, we first define
\[D_0\colon E_0(G) \to E_1(G), \quad D_0(g_0) := (g_0 s(Z_h \cdot g_0)^{-1}, g_0)\]
and check that $(\id - i_h^E p_h^E)(g_0) = (\partial_1 D_0)(g_0)$. Then we define inductively
\[D_n\colon E_n(G) \to E_{n+1}(G), \quad D_n(g_0, \ldots, g_n) := (g_0, (\id - i_h^E p_h^E - D_{n-1} \partial_n)(g_0, \ldots, g_n))\]
and this satisfies $\id - i_h^E p_h^E = D_{n-1} \partial_n + \partial_{n+1} D_n$.

We have a chain map
\begin{align*}
\vartheta_h \colon E_n(G) \to C_n(\IC G)_x, \quad \vartheta_h(g_0, \ldots, g_n) := (g_n^{-1} h g_0, g_0^{-1} g_1, \ldots, g_{n-1}^{-1} g_n).
\end{align*}
The kernel of $\vartheta_h$ is spanned by $\{g\cdot (g_0, \ldots, g_n) - (g_0, \ldots, g_n)\colon \forall i(g_i \in G) \text{ and } g \in Z_h\}$, and $\vartheta_h$ is surjective. It follows that it induces an isomorphism $E_\bullet(G) \otimes_{\IC Z_h} \IC \cong C_\bullet(\IC G)_x$. The maps $i^E_h$, $p^E_h$ and the chain homotopies $D_n$ are all $\IC Z_h$-linear, i.e., they induce maps between the quotient complexes $E_\bullet(G) \otimes_{\IC Z_h} \IC$ and $E_\bullet(Z_h) \otimes_{\IC Z_h} \IC$. The main result of this subsection is now that we have a commuting diagram
\begin{align*}
\xymatrix{
E_\bullet(G) \otimes_{\IC Z_h} \IC \ar[rr]^-{\vartheta_h} \ar@<2ex>[d]^{p^E_h} && C_\bullet(\IC G)_x \ar@<1ex>[d]^{\pi_h}\\
E_\bullet(Z_h) \otimes_{\IC Z_h} \IC \ar[rr]^-{\vartheta_h} \ar[u]^{i^E_h} && C_\bullet(\IC Z_h)_{[h]} \ar@<1ex>[u]^{\iota_h}
}
\end{align*}
This provides us the chain homotopies between the identity and $\iota_h \circ \pi_h$.

\section{Homology of \texorpdfstring{$\boldsymbol{\ell^1}$}{l1}-rapid-decay group algebras}
\label{sec_homology_l1}

Let $G$ be a finitely generated group. We fix a finite, symmetric generating set and get a word-length norm $|-|\colon G \to \IR_{\ge 0}$ on $G$. Note that any two choices of finite, generating sets result in quasi-isometric word-length norms.

\begin{defn}\label{defn_ell1_RD_algebra}
For every $k\in \IN$ we define a norm $\|-\|_{k,1}$ on $\IC G$ by
\[\|f\|_{k,1} := \|(1+|-|)^k \cdot f(-)\|_{\ell^1 G}.\]
We denote by $\ell^1_\infty G$ the closure of $\IC G$ under the family of norms $(\|-\|_{k,1})_{k \in \IN}$.
\end{defn}

For any \Frechet algebra\footnote{A \Frechet algebra is a topological vector space whose topology is Hausdorff and induced by a countable family of semi-norms such that it is complete with respect to this family of semi-norms, and such that multiplication is jointly continuous.} $A$ we define $\HHocont_\ast(A)$ analogously as its algebraic counterpart, but we use the completed projective tensor product to form the chain groups.

For $x \in \langle G\rangle$ and $n\in \IN_0$ we equip $C_n(\IC G)_x \subset C_n(\ell^1_\infty G)$ with the induced subspace norm and denote by $C_n(\ell^1_\infty G)_x$ the completion.\footnote{Analogously to Footnote~\ref{footnote_iso_tensor_group_algebra} on Page~\pageref{page_footnote} we use here the isomorphism $\ell^1(G) \hatotimes \cdots \hatotimes \ell^1(G) \cong \ell^1(G \times \cdots \times G)$.} Instead of the isomorphism \eqref{eqr423erwe3} we now only have an injective chain map
\begin{equation}
\label{eq_splitting_l1_cyclic}
C_\bullet(\ell^1_\infty G) \to \prod_{x \in \langle G \rangle} C_\bullet(\ell^1_\infty G)_x.
\end{equation}
It induces a map
\begin{equation}
\HHocont_\ast(\ell^1_\infty G) \to \prod_{x \in \langle G \rangle} \HHocont_\ast(\ell^1_\infty G)_x\label{eq_inj_1}
\end{equation}
which in theory may not be injective, although no counter-examples to injectivity are currently known. Here we write $\HHocont_\ast(\ell^1_\infty G)_x$ for $\HHocont_\ast(C_\bullet(\ell^1_\infty G)_x)$ to shorten notation.

In Section~\ref{sec_l1_localized} we will compute the single factors in the above decomposition,\footnote{Ji--Ogle--Ramsey \cite{ji_ogle_ramsey} already carried out these computations (in greater generality). We write them down again since our later results depend on how these computations concretely look like.} and in Section~\ref{sec_semihyp_product} we will show that for a certain class of groups \eqref{eq_inj_1} is actually injective.

\subsection{Rapid decay group homology}

We will compute the single factors of \eqref{eq_inj_1} by comparing them with a rapid decay version of group homology:

\begin{defn}
On the chain group $C_n(G)$ we define for each $k\in \IN$ a norm $\|-\|_{k,1}$ by
\[\|c\|_{k,1} := \sum_{(g_1, \ldots, g_n) \in G^n} |c(1, g_1, \ldots, g_n)|\cdot \diam(1, g_1, \ldots, g_n)^k.\]
We equip $C_n(G)$ with the family of norms $(\|-\|_{k,1} + \|d-\|_{k,1})_{k\in \IN}$ and denote its completion by $C^{\RD}_n(G)$. The resulting homology is denoted by $H^{\RD}_\ast(G)$.
\end{defn}

To get a hold on the rapid decay group homology we have to impose a polynomial control on the higher-order Dehn functions of the group $G$. Let us define that now.\footnote{Note that there several different competing definitions for higher-order Dehn functions. We have chosen the version of Ji--Ramsey \cite{ji_ramsey}. Another version is due to Riley \cite{riley} and yet another due to Alonso--Pride--Wang \cite{alonso_wang_pride}.}

\begin{defn}[Higher-order Dehn functions, Ji--Ramsey \cite{ji_ramsey}]
Let $X$ be a simplicial complex. For a simplicial $N$-chain $b$ which is a boundary we denote by $l_f(b)$ the least $\ell^1$-norm of an $(N+1)$-chain $a$ with $\partial a = b$. We denote the $\ell^1$-norm of $b$ by $|b|$. The $N$-th Dehn function $d^N(-)\colon \IN \to \IN \cup \{\infty\}$ of $X$ is defined as
\[d^N(k) := \sup_{|b| \le k} l_f(b),\]
where the supremum runs over all $N$-boundaries $b$ of $X$ with $|b| \le k$.

For a group $G$ we choose a simplicial model for $BG$. The higher-order Dehn functions of $G$ are then defined as the higher-order Dehn functions of $EG$. If $G$ is of type $F_{N+1}$, then all the higher-order Dehn functions $d^n(-)$ up to $n \le N$ have finite values and the growth type (e.g., being asymptotically a polynomial of a certain degree) does not depend on the chosen model for $BG$ with finite $(N+1)$-skeleton \cite[Section~2]{ji_ramsey}.
\end{defn}

\begin{prop}\label{prop_Dehn}
Let $G$ be of type $F_\infty$ and let it have polynomially bounded higher-order Dehn functions.
\begin{enumerate}
\item\label{item_iso_homology_RD} The inclusion $C_\bullet(G) \to C_\bullet^\RD(G)$ induces an isomorphism $H_\ast(G) \cong H_\ast^\RD(G)$.
\item\label{item_higher_dehn_estimate} For every $n\in \IN$ and every $k\in \IN$ exists a constant $C_k > 0$ and $p_k \in \IN$ such that if a chain $c \in C_n^\RD(G)$ is a boundary, then there is a chain $b \in C_{n+1}^\RD(G)$ with $d(b) = c$ and $\|b\|_{k,1} \le C_k \cdot \|c\|_{k+p_k,1}$.

Here $p_k$ is the degree of the $n$-th Dehn function of $G$ and the constant $C_k$ depends on its coefficients.
\end{enumerate}
\end{prop}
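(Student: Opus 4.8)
The plan is to prove part~\ref{item_higher_dehn_estimate} — the quantitative filling estimate — and part~\ref{item_iso_homology_RD} together, the common engine being a comparison of the bar complex $C_\bullet(G)$ with a \emph{finite} model of $BG$, carried out with polynomial control in the word metric; this is where the polynomial bound on the higher Dehn functions is used. The argument follows the line of Ji--Ramsey~\cite{ji_ramsey} and Ji~\cite{ji_nilpotency}. Since $G$ is of type $F_\infty$ we fix a model of $BG$ with finite skeleta whose universal cover $EG$ has bounded geometry (locally finite, cells of uniformly bounded diameter); let $D_\bullet$ be the resulting complex of finitely generated free $\IC$-modules computing $H_\ast(G;\IC)$. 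Comparing the two free $\IC G$-resolutions of $\IC$ — the homogeneous bar resolution and the cellular chains of $EG$ — and descending to $C_\bullet(G)$ and $D_\bullet$, one obtains chain maps $\alpha\colon C_\bullet(G)\to D_\bullet$, $\beta\colon D_\bullet\to C_\bullet(G)$ and chain homotopies $h,h'$ realising $\beta\alpha\simeq\id$ and $\alpha\beta\simeq\id$.

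These maps are built inductively degree by degree, and each inductive step amounts to solving a filling problem $\partial(?)=w$ in one of the two complexes; the point is to do so with geometric control. In the cellular resolution of $EG$ the higher Dehn function hypothesis says that a boundary $w$ of $\ell^1$-mass $M$ admits a filling of $\ell^1$-mass at most $d^m(M)$, a polynomial in $M$, in the relevant degree $m\le n$; moreover, using bounded geometry of $EG$, such a filling may be taken with support of diameter polynomially bounded in $\diam(\supp w)$ and $M$ — a minimal-mass filling, after deleting connected components disjoint from $w$ (legitimate since the resolution is acyclic in positive degrees), consists of a controlled number of uniformly small cells and hence is spatially concentrated near $w$. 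In the bar resolution fillings are even cheaper: cone a cycle off a vertex already occurring in it, which keeps both the $\ell^1$-mass and the diameter linearly controlled. Propagating these estimates through the induction shows, in each degree $m\le n+1$: $\alpha$ sends a simplex $\sigma$ to a chain of $\ell^1$-size polynomial in $\diam(\sigma)$; $\beta$ sends each cell to a fixed finite chain; and $h$ sends $\sigma$ to a chain of polynomial $\ell^1$-size whose simplices have diameter polynomial in $\diam(\sigma)$ — the polynomial degrees being governed by $\deg d^0,\dots,\deg d^n$. Summing these simplex-wise bounds over $c=\sum_\sigma c_\sigma\sigma$ term by term then shows that $\alpha,\beta,h$ and $h'$ extend to bounded operators between the rapid-decay completions, each extension losing only a polynomial-in-$k$ amount of weight, with polynomial degree controlled by $\deg d^n$.

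Granting this, part~\ref{item_higher_dehn_estimate} follows quickly: fix a finite-rank — hence bounded — linear section $P$ of $\partial$ on the $n$-boundaries of $D_\bullet$. Given a boundary $c\in C_n^\RD(G)$, the chain $\alpha(c)\in D_n$ is itself a boundary (apply the bounded map $\alpha$ to any filling of $c$), so we may set $b:=\beta\big(P(\alpha(c))\big)+h(c)\in C_{n+1}^\RD(G)$; using $\partial\beta=\beta\partial$, $\partial P=\id$ on boundaries, $\partial h+h\partial=\id-\beta\alpha$ and $\partial c=0$ one computes $\partial b=\beta(\alpha(c))+c-\beta\alpha(c)=c$, and the boundedness estimates above give $\|b\|_{k,1}\le C_k\|c\|_{k+p_k,1}$ with $p_k$ as stated (one first runs this for finitely supported $c$ to get uniform constants and then passes to the completion). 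For part~\ref{item_iso_homology_RD}: the bounded extensions of $\alpha,\beta,h,h'$ exhibit $C_\bullet^\RD(G)$ and $D_\bullet^\RD=D_\bullet$ (the completion is trivial, each $D_m$ being finite-dimensional) as chain homotopy equivalent, whence $H_\ast^\RD(G)\cong H_\ast(D_\bullet)\cong H_\ast(G;\IC)$; and since $\alpha$ restricts on $C_\bullet(G)$ to the comparison map realising $H_\ast(G)\cong H_\ast(D_\bullet)$, this is the isomorphism induced by the inclusion $C_\bullet(G)\hookrightarrow C_\bullet^\RD(G)$.

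The main obstacle is the spatial (diameter) control of fillings needed throughout the inductive construction: the defining property of $d^n$ bounds only the $\ell^1$-mass of a filling, whereas the rapid-decay norms weight by the diameters of the simplices, so one must upgrade ``polynomially bounded Dehn function'' to ``polynomially bounded filling support'' using bounded geometry of the chosen model, and then carefully carry both the mass and the diameter bounds through the induction so that the resulting weight loss $p_k$ comes out depending on $k$ and $\deg d^n$ in the asserted way. A minor secondary point is to arrange the several chain homotopies compatibly so that all of them — not only $\alpha$ and $\beta$ — extend continuously to the completions.
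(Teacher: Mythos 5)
Your overall architecture --- compare the bar complex with the cellular chains of a finite model of $BG$ via chain maps $\alpha,\beta$ and homotopies built by controlled fillings, then extract both the isomorphism and the quantitative estimate from their bounded extensions to the rapid-decay completions --- is exactly the line of the references the paper cites here (the paper gives no self-contained proof: Point~\ref{item_iso_homology_RD} is attributed to Ogle, Meyer, Ji--Ramsey and to \cite[Section~4]{engel_BSNC}, Point~\ref{item_higher_dehn_estimate} to the first part of the proof of \cite[Theorem~2.6]{ji_ramsey}). The algebraic skeleton of your argument is sound: the computation $\partial b=\beta\alpha(c)+(c-\beta\alpha(c)-h(\partial c))=c$ for $b=\beta(P(\alpha(c)))+h(c)$ is correct, finite-dimensionality of the $D_m$ makes the section $P$ harmless, and the compatibility of $\alpha$ with the inclusion $C_\bullet(G)\hookrightarrow C_\bullet^\RD(G)$ does identify the two isomorphisms in Point~\ref{item_iso_homology_RD}.

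The genuine gap sits in the step you yourself single out as the main obstacle, the upgrade from mass control to diameter control of fillings. You argue that a (near-)minimal-mass filling $a$ of $w$, after deleting the connected components of $\supp a$ disjoint from $\supp w$, ``consists of a controlled number of uniformly small cells and hence is spatially concentrated near $w$''. Over $\IC$ (or $\IR$) coefficients this does not follow: the $\ell^1$-mass $\sum_\tau|a_\tau|$ does not bound $\#\supp a$ --- a chain of mass $1$ can be spread over arbitrarily many cells with tiny coefficients, and a single connected component of its support can then reach arbitrarily far from $\supp w$ even though every component meets $\supp w$. (The deletion step itself is fine: components do not interact under $\partial$, so $\partial(a|_K)=w|_K=0$ for $K$ disjoint from $\supp w$.) The inference ``bounded mass $\Rightarrow$ bounded number of cells $\Rightarrow$ bounded diameter'' needs the nonzero coefficients of the filling to be bounded away from $0$, e.g.\ integral. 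So you must either (i) run the entire induction integrally, so that the cycles to be filled are integral and a minimal \emph{integral} filling satisfies $\#\supp a\le\|a\|_1$ --- which additionally requires that integral filling norms in $EG$ are polynomially bounded, something that does not formally follow from the paper's $\IC$-linear definition of $d^N$; or (ii) replace the concentration argument by a truncation-and-correction scheme (restrict $a$ to a neighborhood of $\supp w$, fill the small boundary error created on the separating annulus, iterate); or (iii) produce the fillings from a polynomially bounded combing, as Ji--Ramsey effectively do, which gives mass and diameter control simultaneously. Without one of these repairs the inductive diameter bounds on $\alpha$ and $h$, and hence the continuity of $h$ on $C_\bullet^\RD(G)$, are not established. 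A minor secondary point: as set up, your $p_k$ depends on the degrees of $d^0,\dots,d^n$ and, unless the diameter growth through the induction is kept linear, also on $k$; this is weaker than the precise description of $p_k$ in Point~\ref{item_higher_dehn_estimate}, though still sufficient for the way the proposition is used in Lemma~\ref{lem_inj_prod}.
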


\begin{proof}
Proofs of Point \ref{item_iso_homology_RD} are given by Ogle \cite{ogle}, Meyer \cite{meyer} and Ji--Ramsey \cite{ji_ramsey}. A geometric proof of it was provided by the author in \cite[Section~4]{engel_BSNC} (though in this reference the version of Riley is used for the higher-order Dehn functions, the proof also works with the version of Ji--Ramsey).

The proof of Point~\ref{item_higher_dehn_estimate} is contained in the first part of the proof of Theorem 2.6($3 \Rightarrow 1$) of Ji--Ramsey \cite{ji_ramsey}.
\end{proof}

\subsection{Computation of the homology localized at a conjugacy class}
\label{sec_l1_localized}

The results in this section are already known \cite{ji_ogle_ramsey}. But we need their proofs in the next section, hence we have to write everything down again.

Let $G$ be a finitely generated group, $x \in \langle G\rangle$ a conjugacy class, and $h \in x$. Recall that we denote by $Z_h \subset G$ the centralizer of $h$ in $G$. We fix a word-length norm on $G$.

In the proof of Lemma~\ref{lemrtetrwre} we defined a map $p_h\colon G \to Z_h$. It was based on picking for each coset $y \in Z_h\backslash G$ a representative $s(y)$ and then mapping $g$ to $g s(y)^{-1}$ if $g \in y$. We pick now the representative $s(y) \in y$ such that it minimizes the length in its class, i.e., $|s(y)| \le |g|$ for all $g \in y$. This does not necessarily uniquely determine the element $s(y)$, but this is of no problem to us. If we equip $Z_h \subset G$ with the induced subspace norm,\footnote{If $Z_h$ is quasi-isometrically embedded in $G$, then we can also use a word norm on $Z_h$. Then $p_h$ is not necessarily $2$-Lipschitz anymore.} then the map $p_h$ is $2$-Lipschitz, i.e., $|p_h(g)| \le 2|g|$ for all $g \in G$.

\begin{defn}[{\cite[Page~99]{ji_ogle_ramsey}}]\label{defn_conjugacy_bound}
We say that $G$ has a polynomially solvable conjugacy bound at $g \in G$, if there exists a polynomial $P$ such that we have the following: for each $h \in [g]$ exists an $r \in G$ with $h = r^{-1} g r$ and $|r| \le P(|h|)$.
\end{defn}

We let $h \in G$ and equip $Z_h \subset G$ with the induced subspace norm. Then the inclusion of complexes $\iota_h\colon C_\bullet(\ell^1_\infty Z_h)_{[h]} \to C_\bullet(\ell^1_\infty G)_{[h]}$ is continuous. If $G$ has a polynomially solvable conjugacy bound at $h$ and if we pick the elements $s(y)$ as above, then the map $\pi_h$ from \eqref{eq442442344} is a continuous map\footnote{We can write $p_h(r g_0 \cdots g_n)^{-1} h = p_h(h^{-1} r g_0 \cdots g_n)^{-1} = p_h(r)$ to eliminate the appearance of $h$ in the formula for $\pi_h$.} and therefore extends continuously to a map
\begin{equation}
\label{eq_map_pi_h}
\pi_h \colon C_\bullet(\ell^1_\infty G)_{[h]} \to C_\bullet(\ell^1_\infty Z_h)_{[h]}.
\end{equation}
Since the chain homotopies from $\iota_h \circ \pi_h$ to the identity are also continuous,\footnote{One has to redo Section~\ref{sec_chain_homotopies} in the setting of $\ell^1$-rapid decay algebras here. Especially, one has to convince oneself that under the assumptions here we have an isomorphism $E^\RD_\bullet(G) \barotimes_{\ell^1_\infty Z_h} \IC \cong C_\bullet(\ell^1_\infty Z_h)_{[h]}$.} we conclude that $\pi_h$ induces isomorphisms on homology groups.

Let us consider the composition $\psi \circ \phi_h^{-1}$ of the last two maps from \eqref{eq_full_comp} and the inverse of this composition. If we equip $Z_h$ with a word-length norm, then this composition and its inverse are continuous and hence we have an isomorphism
\begin{equation*}
\HHocont_\ast(\ell^1_\infty Z_h)_{[h]} \cong H_\ast^\RD(Z_h).
\end{equation*}

Putting it all together, we have proved the following:
\begin{prop}[{\cite[Corollary~1.4.6]{ji_ogle_ramsey}}]
\label{prop_computation_l1_centralizer}
Let $G$ be a finitely generated group. For an $h \in G$ assume that the centralizer $Z_h$ is quasi-isometrically embedded in $G$, and let $G$ have a polynomially solvable conjugacy bound at $h$. Then
\[\HHocont_\ast(\ell^1_\infty G)_{[h]} \cong H_\ast^\RD(Z_h).\]
\end{prop}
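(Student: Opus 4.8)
The proposition is a continuity refinement of the three-step computation of $\HH_\ast(\IC G)$ recalled in Section~\ref{sec_review}, so the plan is to revisit that argument and check that every map and every chain homotopy occurring in it is bounded for the relevant families of rapid-decay norms, using the two hypotheses.

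First, the inclusion $\iota_h$ is trivially a continuous chain map, being the inclusion of a closed subspace with the induced norms. The substantive point is boundedness of the retraction $\pi_h$ of \eqref{eq442442344}. Each entry of $\pi_h(g_0,\dots,g_n)$ other than the first has the form $p_h(rg_0\cdots g_{j-1})^{-1}p_h(rg_0\cdots g_j)$, and the first entry $p_h(rg_0\cdots g_n)^{-1}h\,p_h(rg_0)$ equals $p_h(r)^{-1}p_h(rg_0)$ because $rg_0\cdots g_n=hr$, so that the element $h$ disappears. Since $G$ has a polynomially solvable conjugacy bound at $h$ one may choose $r$ with $|r|\le P(|g_0\cdots g_n|)$; since $p_h$ is $2$-Lipschitz for the subspace norm on $Z_h$; and since the word-lengths of all partial products $g_0\cdots g_j$ are bounded by a constant times the norm of the tuple $(g_0,\dots,g_n)$ in $G^{n+1}$, one obtains that $\diam$ of the output tuple is bounded by a fixed polynomial in $\diam$ of the input tuple. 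Hence $\pi_h$ sends $\|-\|_{k,1}$-bounded sets to $\|-\|_{k',1}$-bounded sets for a suitable $k'$, so it extends continuously to \eqref{eq_map_pi_h} and the relation $\pi_h\circ\iota_h=\id$ persists by density.

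Next I would upgrade the chain homotopy from $\iota_h\circ\pi_h$ to the identity by redoing Section~\ref{sec_chain_homotopies} with the rapid-decay bar complexes $E^\RD_\bullet(G)$, $E^\RD_\bullet(Z_h)$ in place of $E_\bullet(G)$, $E_\bullet(Z_h)$: one checks that $\vartheta_h$ extends to a bounded surjection whose kernel is the closed span of the $\IC Z_h$-commutation relations, so that it identifies the coinvariants $E^\RD_\bullet(G)\barotimes_{\ell^1_\infty Z_h}\IC$ with $C_\bullet(\ell^1_\infty G)_{[h]}$ (and likewise for $Z_h$), and that the $\IC Z_h$-linear maps $i^E_h$, $p^E_h$ and the inductively defined operators $D_n$ are bounded — the last because $D_n$ only prepends a coordinate and applies the already-bounded operator $\id-i^E_hp^E_h-D_{n-1}\partial_n$. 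Passing to the coinvariants, the commuting square at the end of Section~\ref{sec_chain_homotopies} furnishes bounded chain homotopies, so $\pi_h$ induces an isomorphism $\HHocont_\ast(\ell^1_\infty G)_{[h]}\cong\HHocont_\ast(\ell^1_\infty Z_h)_{[h]}$.

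Finally I would identify $\HHocont_\ast(\ell^1_\infty Z_h)_{[h]}$ with $H^\RD_\ast(Z_h)$. Because $Z_h$ is quasi-isometrically embedded in $G$, replacing the subspace norm on $Z_h$ by a genuine word-length norm changes the rapid-decay norms only up to affine rescaling, hence not the completions; and the maps $\phi_h^{-1}$, $\psi$ and their inverses from \eqref{eq_full_comp} are given by explicit reindexing-and-partial-product formulas that are visibly Lipschitz in the relevant diameters, hence bounded. Composing the two isomorphisms finishes the proof. I expect the main obstacle to be the coinvariants identification of the third step: one must verify that the quotient norm on $E^\RD_\bullet(G)\barotimes_{\ell^1_\infty Z_h}\IC$ is equivalent to the rapid-decay norm on $C_\bullet(\ell^1_\infty G)_{[h]}$, equivalently that the length-minimizing choice of coset representatives $s(y)$ makes the tautological section of $\vartheta_h$ bounded — which is again exactly where the $2$-Lipschitz estimate for $p_h$ is essential and where a careless choice of representatives would destroy continuity.
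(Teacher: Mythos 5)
Your proposal is correct and follows essentially the same route as the paper: continuity of $\iota_h$, boundedness of $\pi_h$ via the polynomially solvable conjugacy bound together with the $2$-Lipschitz estimate for $p_h$ coming from the length-minimizing coset representatives (and the elimination of $h$ from the first entry using $rg_0\cdots g_n=hr$), a rapid-decay upgrade of the chain homotopies of Section~\ref{sec_chain_homotopies} including the coinvariants identification via $\vartheta_h$, and finally the continuity of $\psi\circ\phi_h^{-1}$ and its inverse after passing to a word-length norm on the quasi-isometrically embedded $Z_h$. You even single out the same delicate point the paper flags in its footnotes, namely the identification $E^\RD_\bullet(G)\barotimes_{\ell^1_\infty Z_h}\IC \cong C_\bullet(\ell^1_\infty G)_{[h]}$.
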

Note that the result of the above proposition was already obtained by Ji--Ogle--Ramsey. They do not have the assumption of $Z_h$ being quasi-isometrically embedded in $G$ because they use the induced metric on $Z_h$ (which might not be quasi-isometric to a word-length metric on it), whereas we equip $Z_h$ with a word-length metric to make the homology groups $H_\ast^\RD(Z_h)$ independent of $G$. Note also that Ji--Ogle--Ramsey were able to remove the assumption on the conjugacy bound in \cite{ji_ogle_ramsey_no_conjugacy_bound}.

Since for the identity $e \in G$ the centralizer is the whole group and since the conjugacy class at $e$ is trivial, we immediately get from the above proposition the following:

\begin{cor}\label{cor_at_e_HH_l1}
Let $G$ be a finitely generated group. Then
\[\HHocont_\ast(\ell^1_\infty G)_{[e]} \cong H_\ast^\RD(G).\]
\end{cor}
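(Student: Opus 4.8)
The plan is to derive this as the degenerate special case $h = e$ of Proposition~\ref{prop_computation_l1_centralizer}; the only thing to do is to check that the two hypotheses of that proposition are satisfied when $h$ is the identity, and both turn out to be automatic.

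First I would note that the centralizer of the identity element is all of $G$, i.e.\ $Z_e = G$. Equipping $Z_e \subset G$ with the induced subspace norm means equipping $G$ with its chosen word-length norm, and the inclusion $Z_e \hookrightarrow G$ is then the identity map, which is in particular a quasi-isometric embedding. So the first hypothesis of Proposition~\ref{prop_computation_l1_centralizer} holds, and moreover the word-length norm used to define $H_\ast^\RD(Z_e)$ is just the one on $G$, so $H_\ast^\RD(Z_e) = H_\ast^\RD(G)$ on the nose.

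Next I would verify the polynomially solvable conjugacy bound at $e$ from Definition~\ref{defn_conjugacy_bound}. The conjugacy class $[e]$ is the singleton $\{e\}$, so the only $h \in [e]$ to consider is $h = e$ itself, for which we may take $r = e$: then $e = r^{-1} e r$ and $|r| = 0$. Hence any polynomial works as the required bound (for instance the zero polynomial), and the second hypothesis holds as well.

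With both hypotheses in hand, Proposition~\ref{prop_computation_l1_centralizer} gives
\[\HHocont_\ast(\ell^1_\infty G)_{[e]} \cong H_\ast^\RD(Z_e) = H_\ast^\RD(G),\]
which is the assertion. I do not expect any genuine obstacle here: all of the analytic content — the continuity of $\pi_h$, $\iota_h$ and the chain homotopies, together with the isomorphism $\HHocont_\ast(\ell^1_\infty Z_h)_{[h]} \cong H_\ast^\RD(Z_h)$ — is already packaged inside Proposition~\ref{prop_computation_l1_centralizer}, and the corollary merely records that for the identity the geometric hypotheses (quasi-isometric embedding of the centralizer, polynomial conjugacy bound) become vacuous.
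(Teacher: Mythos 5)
Your proposal is correct and matches the paper's own argument: the corollary is stated there as an immediate consequence of Proposition~\ref{prop_computation_l1_centralizer}, precisely because $Z_e = G$ makes the quasi-isometric embedding hypothesis trivial and the conjugacy class of $e$ being a singleton makes the conjugacy bound vacuous. Your write-up just spells out these two verifications in slightly more detail than the paper does.
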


\begin{cor}\label{cor_inj_l1_compl_localized}
Let $G$ be of type $F_\infty$ and let it have polynomially bounded higher-order Dehn functions.

Then the map $\HH_\ast(\IC G)_{[e]} \to \HHocont_\ast(\ell^1_\infty G)_{[e]}$ is an isomorphism.
\end{cor}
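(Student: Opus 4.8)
The plan is to deduce Corollary~\ref{cor_inj_l1_compl_localized} by combining the two homology computations that have just been set up: on the algebraic side, the composition in \eqref{eq_full_comp} identifies $\HH_\ast(\IC G)_{[e]}$ with $H_\ast(G;\IC)$, the ordinary group homology; on the $\ell^1$-side, Corollary~\ref{cor_at_e_HH_l1} identifies $\HHocont_\ast(\ell^1_\infty G)_{[e]}$ with the rapid-decay group homology $H_\ast^\RD(G)$. The comparison map $\HH_\ast(\IC G)_{[e]} \to \HHocont_\ast(\ell^1_\infty G)_{[e]}$ induced by the inclusion $\IC G \hookrightarrow \ell^1_\infty G$ should, under these two identifications, correspond precisely to the comparison map $H_\ast(G;\IC) \to H_\ast^\RD(G)$ induced by the inclusion of chain complexes $C_\bullet(G) \to C_\bullet^\RD(G)$. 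Once that compatibility is in place, the statement is immediate from Proposition~\ref{prop_Dehn}\eqref{item_iso_homology_RD}, which says exactly that the latter map is an isomorphism when $G$ is of type $F_\infty$ with polynomially bounded higher-order Dehn functions.

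So the first step is to verify this compatibility of comparison maps. Here one should track the chain-level maps carefully: the identification of the algebraic localized complex $C_\bullet(\IC G)_{[e]}$ with the bar complex $C_\bullet(G)$ proceeds via $\pi_e$ (at $h = e$, which is just $p_e = \id_G$ since $Z_e = G$), then $\phi_e^{-1}$, then $\psi$, as recorded in \eqref{eq_full_comp}; explicitly, by \eqref{eq_formula_iso_homology} a generator $(g_0,\ldots,g_n)$ with $g_0\cdots g_n = e$ (so one can take $r = e$) maps to the equivariant chain supported on the orbit of $(g_0, g_0 g_1, \ldots, g_0 g_1 \cdots g_n)$. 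The point is that all three maps $\pi_e$, $\phi_e^{-1}$, $\psi$, as well as their inverses, are given by bounded formulas in the word-length norm — indeed $\pi_e$ is $2$-Lipschitz-type as discussed before Definition~\ref{defn_conjugacy_bound}, and $\psi$, $\psi^{-1}$, $\phi_e$, $\phi_e^{-1}$ do not increase diameters by more than a bounded factor — so they extend to the $\ell^1$-rapid-decay completions and identify $C_\bullet(\ell^1_\infty G)_{[e]}$ with $C_\bullet^\RD(G)$ exactly as in the discussion preceding Proposition~\ref{prop_computation_l1_centralizer}. Since the same formulas are used on both the algebraic and the completed level, the square relating the algebraic identification, the completed identification, and the two inclusion maps commutes on the nose, and hence so does the induced square on homology.

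The second step is then purely formal: in the commuting square
\begin{equation*}
\xymatrix{
\HH_\ast(\IC G)_{[e]} \ar[r] \ar[d]_-{\cong} & \HHocont_\ast(\ell^1_\infty G)_{[e]} \ar[d]^-{\cong}\\
H_\ast(G;\IC) \ar[r] & H_\ast^\RD(G)
}
\end{equation*}
the two vertical maps are isomorphisms (the left by the computation in Section~\ref{sec_hom_centralizers}, the right by Corollary~\ref{cor_at_e_HH_l1}) and the bottom map is an isomorphism by Proposition~\ref{prop_Dehn}\eqref{item_iso_homology_RD}; therefore the top map is an isomorphism, which is the claim.

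I expect the only real work to be in the first step — namely, checking that the chain-level identifications used in Section~\ref{sec_hom_centralizers} really are norm-bounded with respect to the rapid-decay seminorms, so that they pass to the completions and make the relevant square commute strictly rather than merely up to homotopy. Concretely, one must confirm that applying $p_e = \id$, reindexing by partial products, and the inverse reindexing all change the diameter $\diam(1, g_1, \ldots, g_n)$ by at most a bounded multiplicative factor, and likewise control the boundary terms appearing in the definition of $C_\bullet^\RD(G)$; none of this is deep, but it is the place where a careless argument could go wrong, since it is exactly the norm estimates — and not just the bare algebra — that distinguish the $\ell^1$-rapid-decay setting from Burghelea's classical computation.
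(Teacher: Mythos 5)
Your proposal is correct and follows essentially the same route as the paper: identify $\HH_\ast(\IC G)_{[e]} \cong H_\ast(G;\IC)$ via the Burghelea computation, identify $\HHocont_\ast(\ell^1_\infty G)_{[e]} \cong H_\ast^\RD(G)$ via Corollary~\ref{cor_at_e_HH_l1}, and conclude with Proposition~\ref{prop_Dehn}.\ref{item_iso_homology_RD}. The paper's proof is in fact terser than yours --- it leaves the compatibility of the two comparison maps implicit --- so your explicit check that the chain-level identifications commute with the inclusions is a welcome (and correct) elaboration rather than a deviation.
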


\begin{proof}
From Section~\ref{sec_review} we know that $\HH_\ast(\IC G)_{[e]} \cong H_\ast(G)$ and from the Corollary~\ref{cor_at_e_HH_l1} that $\HHocont_\ast(\ell^1_\infty G)_{[e]} \cong H_\ast^\RD(G)$. The claimed result follows with Proposition~\ref{prop_Dehn}.\ref{item_iso_homology_RD}.
\end{proof}

\subsection{Injectivity of the product decomposition}
\label{sec_semihyp_product}

The discussion in the previous two sections can be used to prove the following results:

\begin{lem}\label{lem_inj_l1_completion}
Let $G$ be a countable group and pick for each conjugacy class $x \in \langle G\rangle$ an element $h_x \in x$ such that the following holds:
\begin{enumerate}
\item\label{item_one_lem_inj_l1_compl} $G$ has a polynomially solvable conjugacy bound at each $h_x$.\footnote{see Definition~\ref{defn_conjugacy_bound}}
\item The centralizers $Z_{h_x} \subset G$ are all of type $F_\infty$ and satisfy the following two conditions:
\begin{enumerate}
\item Every centralizer admits a word-length norm such that the inclusion $Z_{h_x} \subset G$ is a quasi-isometric embedding.
\item Each centralizer has polynomially bounded higher-order Dehn functions.
\end{enumerate}
\end{enumerate}
Then the map $\HH_\ast(\IC G) \to \HHocont_\ast(\ell^1_\infty G)$ is injective.
\end{lem}

\begin{proof}
We have the following diagram, where the left vertical isomorphism is due to the Theorem~\ref{thm2435ter243} and the right vertical map exists due to our assumptions:
\[\xymatrix{
\HH_\ast(\IC G) \ar[r] \ar[d]_{\cong} & \HHocont_\ast(\ell^1_\infty G) \ar[d]\\
\bigoplus_{[g] \in \langle G \rangle} H_\ast(Z_g;\IC) \ar[r] & \prod_{[g] \in \langle G \rangle} H_\ast^\RD(Z_g;\IC)
}\]
By Proposition~\ref{prop_Dehn}.\ref{item_iso_homology_RD} the lower horizontal map is injective, hence the lemma follows.
\end{proof}

\begin{lem}\label{lem_inj_prod}
Let $G$ be a countable group. For every conjugacy class $x \in \langle G\rangle$ we pick an element $h_x \in x$ minimizing the word-length norm in its conjugacy class. We assume that the following holds:
\begin{enumerate}
\item\label{assumption_pol_conjugacy_bond} There is a polynomial $P(-,-)$ in two variables and $G$ has a polynomially solvable conjugacy bound at $h_x$ with governing polynomial $P(-,|h_x|)$.
\item\label{assumption_centralizers} The centralizers $Z_{h_x} \subset G$ are all of type $F_\infty$ and satisfy the following two conditions:
\begin{enumerate}
\item\label{assumption_pol_qi} There exists a polynomial $P^\prime(-)$ and every centralizer $Z_h$ admits a word-length norm such that the inclusion $Z_{h_x} \subset G$ is a quasi-isometric embedding with constants bounded by $P^\prime(|h_x|)$.
\item\label{assumption_pol_Dehn} For each $n\in \IN$ there is a polynomial $P_n(-,-)$ in two variables and each $Z_{h_x}$ has its $n$-th higher-order Dehn functions bounded from above by $P_n(-,|h_x|)$.
\end{enumerate}
\end{enumerate}
Then the map \eqref{eq_inj_1}, i.e., $\HHocont_\ast(\ell^1_\infty G) \to \prod_{x \in \langle G \rangle} \HHocont_\ast(\ell^1_\infty G)_x$ is injective.
\end{lem}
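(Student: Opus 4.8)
The plan is to run the argument of Lemma~\ref{lem_inj_l1_completion} one level deeper, at the chain level, tracking norms uniformly over the conjugacy classes. Fix a degree $n$ and let $c\in C_n(\ell^1_\infty G)$ be a cycle lying in the kernel of \eqref{eq_inj_1}. The injective chain map \eqref{eq_splitting_l1_cyclic} sends $c$ to $(c_x)_x$, and since the underlying supports in $G^{n+1}$ are pairwise disjoint we have $c=\sum_{x\in\langle G\rangle}c_x$ with convergence in every norm $\|-\|_{k,1}$ and $\|c\|_{k,1}=\sum_x\|c_x\|_{k,1}$. Each $c_x$ is a cycle, and the hypothesis says precisely that each $c_x$ is a boundary in $C_\bullet(\ell^1_\infty G)_x$, say $c_x=b(w_x)$ with $w_x\in C_{n+1}(\ell^1_\infty G)_x$; the difficulty is that we have no norm control on $w_x$. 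So the goal is to produce, for every $x$, a primitive $w_x'$ of $c_x$ in $C_{n+1}(\ell^1_\infty G)_x$ with $\|w_x'\|_{k,1}\le A_k\,\|c_x\|_{k+M_k,1}$ for constants $A_k,M_k$ \emph{independent of $x$}. Granting this, since each $C_{n+1}(\ell^1_\infty G)_x$ sits isometrically inside $C_{n+1}(\ell^1_\infty G)$ (it is the closure there of $C_{n+1}(\IC G)_x$), the sum $w:=\sum_x w_x'$ converges in $C_{n+1}(\ell^1_\infty G)$, because $\sum_x\|w_x'\|_{k,1}\le A_k\,\|c\|_{k+M_k,1}<\infty$ for all $k$; and $b(w)=\sum_x b(w_x')=\sum_x c_x=c$ by continuity of $b$, so $c$ is a boundary and \eqref{eq_inj_1} is injective.

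To obtain the uniform primitives I would first record the quantitative content of Proposition~\ref{prop_computation_l1_centralizer}. Under hypotheses \ref{assumption_pol_conjugacy_bond}, \ref{assumption_pol_qi} and \ref{assumption_pol_Dehn}, redoing Sections~\ref{sechgertet}, \ref{sec_hom_centralizers} and \ref{sec_chain_homotopies} in the $\ell^1$-rapid-decay setting produces, for every $x$, continuous chain maps $f_x:=\psi\circ\phi_{h_x}^{-1}\circ\pi_{h_x}\colon C_\bullet(\ell^1_\infty G)_x\to C^\RD_\bullet(Z_{h_x})$ and $g_x:=\iota_{h_x}\circ\phi_{h_x}\circ\psi^{-1}$ in the other direction (notation of \eqref{eq_full_comp} and \eqref{eq_map_pi_h}) with $f_x g_x=\id$, together with a continuous chain homotopy $s_x$ on $C_\bullet(\ell^1_\infty G)_x$ satisfying $\id-g_x f_x=b\,s_x+s_x\,b$ (obtained by transporting the maps $D_\bullet$ of Section~\ref{sec_chain_homotopies} along $\vartheta_{h_x}$). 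The key claim, which is where all three hypotheses enter, is that there are polynomials $Q_k$ and integers $N_k$, depending only on $k$, $n$ and the fixed degrees of $P$, $P'$, $P_n$ but not on $x$, such that $f_x$, $g_x$ and $s_x$ each satisfy $\|-\|_{k,1}\le Q_k(|h_x|)\,\|-\|_{k+N_k,1}$. This is read off from the explicit formulas \eqref{eq442442344} and \eqref{eq_formula_iso_homology}: the word lengths in $Z_{h_x}$ of the entries produced by $\pi_{h_x}$, $\phi_{h_x}^{\pm1}$, $\psi^{\pm1}$ and $D_\bullet$ are bounded by a fixed polynomial in $\sum_i|g_i|$ and $|h_x|$ — because $p_{h_x}$ is $2$-Lipschitz for the subspace norm, because a conjugating $r$ with $g_0\cdots g_n=r^{-1}h_x r$ can be chosen with $|r|\le P(\sum_i|g_i|,|h_x|)$ by \ref{assumption_pol_conjugacy_bond}, and because switching between the subspace norm and the chosen word-length norm on $Z_{h_x}$ costs a factor $\le P'(|h_x|)$ by \ref{assumption_pol_qi} — while the $\diam$-weights of the resulting tuples are at most twice these lengths.

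With this in hand the construction is routine. Put $\bar c_x:=f_x(c_x)\in C^\RD_n(Z_{h_x})$; it is a boundary since $f_x$ is a chain map and $c_x$ is one. As $Z_{h_x}$ is of type $F_\infty$ by \ref{assumption_centralizers} and has $n$-th Dehn function bounded by the polynomial $P_n(-,|h_x|)$ by \ref{assumption_pol_Dehn}, Proposition~\ref{prop_Dehn}.\ref{item_higher_dehn_estimate} applied to $Z_{h_x}$ yields $\beta_x\in C^\RD_{n+1}(Z_{h_x})$ with $d\beta_x=\bar c_x$ and $\|\beta_x\|_{k,1}\le C_k(|h_x|)\,\|\bar c_x\|_{k+p_k,1}$, where the shift $p_k$ is bounded by the degree of $P_n(-,|h_x|)$ in its first variable (hence by a fixed integer) and $C_k(|h_x|)$ is controlled by the coefficients of $P_n(-,|h_x|)$ (hence by a polynomial in $|h_x|$). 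Set $w_x':=g_x(\beta_x)+s_x(c_x)\in C_{n+1}(\ell^1_\infty G)_x$; using $b\,g_x=g_x\,d$, $g_x(\bar c_x)=g_x f_x(c_x)$, $b(c_x)=0$ and $\id-g_x f_x=b\,s_x+s_x\,b$ one checks $b(w_x')=g_x f_x(c_x)+(c_x-g_x f_x(c_x))=c_x$. Composing the three estimates of the previous paragraph with the bound on $\beta_x$ then gives a polynomial $\widetilde Q_k$ and an integer $\widetilde N_k$, both independent of $x$, with $\|w_x'\|_{k,1}\le\widetilde Q_k(|h_x|)\,\|c_x\|_{k+\widetilde N_k,1}$.

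The final step removes the factor $\widetilde Q_k(|h_x|)$, and here the choice of $h_x$ as a shortest element of $x$ is essential. If $c_x\neq 0$ then some tuple $(g_0,\dots,g_n)$ with $g_0\cdots g_n\in x$ lies in its support, whence $|h_x|\le|g_0\cdots g_n|\le\sum_i|g_i|$; combined with $1+\sum_i|g_i|\le\prod_i(1+|g_i|)$ this gives $\widetilde Q_k(|h_x|)\le A_k\prod_i(1+|g_i|)^{\deg\widetilde Q_k}$ for every tuple in the support of $c_x$. Plugging this into the defining sum of $\|c_x\|_{k+\widetilde N_k,1}$ absorbs the factor into the weight, yielding $\|w_x'\|_{k,1}\le A_k\,\|c_x\|_{k+M_k,1}$ with $M_k:=\widetilde N_k+\deg\widetilde Q_k$ independent of $x$, which is exactly the uniform estimate needed in the first paragraph. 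The main obstacle is the claim of the second paragraph — the polynomial-in-$|h_x|$ continuity bounds for $f_x$, $g_x$ and $s_x$ with $x$-independent index shifts — which requires carrying out the constructions of Sections~\ref{sechgertet}–\ref{sec_chain_homotopies} once more while bookkeeping all constants; the hypotheses \ref{assumption_pol_conjugacy_bond}, \ref{assumption_pol_qi} and \ref{assumption_pol_Dehn} are precisely the ``polynomial family in $|h_x|$'' strengthenings of the hypotheses of Proposition~\ref{prop_computation_l1_centralizer} that make this succeed.
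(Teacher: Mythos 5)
Your proposal is correct and follows essentially the same route as the paper: decompose the cycle over conjugacy classes, transport each piece to $C^\RD_\bullet(Z_{h_x})$ via $\pi_{h_x}$ and $\psi\circ\phi_{h_x}^{-1}$, fill it using the quantitative Dehn-function estimate of Proposition~\ref{prop_Dehn}.\ref{item_higher_dehn_estimate}, transport back, and assemble, with Assumptions~\ref{assumption_pol_conjugacy_bond}--\ref{assumption_pol_Dehn} supplying the polynomial-in-$|h_x|$ continuity bounds. Your write-up is in fact slightly more explicit than the paper's on two points it leaves implicit — the correction term $s_x(c_x)$ making $b(w_x')=c_x$ exact, and the absorption of the factor $\widetilde Q_k(|h_x|)$ into a norm-index shift using the minimality of $|h_x|$ in its conjugacy class — but the underlying argument is the same.
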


\begin{proof}
Let $[c] \in \HHocont_n(\ell^1_\infty G)$. We first apply the chain map \eqref{eq_splitting_l1_cyclic} to map $c$ to a cycle in the space $\prod_{x \in \langle G \rangle} C_n(\ell^1_\infty G)_x$. Note that each $C_n(\ell^1_\infty G)_x$ has the induced subspace norms from $C_n(\ell^1_\infty G)$. We fix one norm on $C_n(\ell^1_\infty G)$ for the rest of this proof, and consider the corresponding induced norms on each $C_n(\ell^1_\infty G)_x$. The norm of every factor of the image of $c$ under \eqref{eq_splitting_l1_cyclic} is bounded from above by the norm of $c$.

For each conjugacy class $x \in \langle G\rangle$ we let $h_x \in x$ be as in the assumptions of this lemma, i.e., $h_x$ minimizes the word-length norm in its conjugacy class $x$. To each of the factors $C_n(\ell^1_\infty G)_x$ we now apply the map \eqref{eq_map_pi_h}, i.e., we apply $\pi_{h_x} \colon C_n(\ell^1_\infty G)_x \to C_n(\ell^1_\infty Z_{h_x})_{[h_x]}$. Because of the Assumptions~\ref{assumption_pol_conjugacy_bond} and \ref{assumption_pol_qi} of this lemma we can conclude now that the norms of the resulting chains are bounded from above by a factor times the norm of $c$ and such that the factors for the conjugacy classes grow at most polynomially in $|h_x|$.

We use now the composition of the last two maps from \eqref{eq_full_comp} on each factor and arrive in the space $\prod_{x \in \langle G \rangle} C_n^\RD(Z_{h_x})$. Again we can conclude that the norm of each factor of the image of $c$ in $\prod_{x \in \langle G \rangle} C_n^\RD(Z_{h_x})$ is bounded from above by a constant times the norm of $c$ and the constant grows at most polynomially in $|h_x|$.

Now assume that $[c] \in \HHocont_n(\ell^1_\infty G)$ is mapped to zero under the map \eqref{eq_inj_1}. Then we know that the image of $c$ in $\prod_{x \in \langle G \rangle} C_n^\RD(Z_{h_x})$ is a boundary. By Point~\ref{item_higher_dehn_estimate} of Proposition~\ref{prop_Dehn} in combination with Assumption~\ref{assumption_pol_Dehn} of this lemma this implies that there exist chains $b_x \in C_{n+1}^\RD(G)$ with $d(b_x) = c_x$, where $c_x \in C_n^\RD(Z_{h_x})$ is the $x$-component of the image of $c$, and we have $\|b_x\|_{k,1} \le C_k(x) \cdot \|c_x\|_{k+p_k,1}$, where the constants $C_k(x)$ depend polynomially on $|h_x|$. Here $k \in \IN$ corresponds to the at the beginning of this proof chosen norm.

We now use the inverse of the composition of the last two maps from \eqref{eq_full_comp} on each of the factors to map $\prod_{x \in \langle G \rangle} b_x$ into $\prod_{x \in \langle G \rangle} C_{n+1}(\ell^1_\infty Z_{h_x})_{[h_x]}$. Due to the above estimates on the norms of the chains $b_x$ we conclude that the image of $\prod_{x \in \langle G \rangle} b_x$ in $\prod_{x \in \langle G \rangle} C_{n+1}(\ell^1_\infty Z_{h_x})_{[h_x]}$ has finite norms that grow at most polynomially in $|h_x|$. Now we apply $\prod_{x \in \langle G \rangle} \iota_{h_x}$, where the maps $\iota_{h_x} \colon C_{n+1}(\ell^1_\infty Z_{h_x})_{[h_x]} \to C_{n+1}(\ell^1_\infty G)_{x}$ are the inclusions, to arrive at a chain in the space $\prod_{x \in \langle G \rangle} C_{n+1}(\ell^1_\infty G)_{x}$ with an analogous estimate on the norms. Therefore it assembles to a well-defined chain $b$ in $C_{n+1}(\ell^1_\infty G)$.

The claim is now that the Hochschild boundary of $b$ is $c$. To see this one applies the chain homotopies between the identities and the compositions $\iota_{h_x} \circ \pi_{h_x}$ from Section~\ref{sec_chain_homotopies}. The important observation to do here is that under the assumptions of this lemma they assemble to a single well-defined chain homotopy between the identity and $\prod_{x \in \langle G \rangle} \iota_{h_x} \circ \pi_{h_x}$ with the needed polynomial estimates in $|h_x|$.
\end{proof}

\begin{rem}\label{rem_ogle}
This remark resulted from discussions with Crichton Ogle.

Assumption~\ref{assumption_pol_conjugacy_bond} in Lemma~\ref{lem_inj_prod} is relatively restrictive (at least, when compared to the Assumption~\ref{assumption_centralizers})---in Lemma~\ref{lem_Examples_Assumption_1} we exhibit some classes of groups satisfying it.

But using the techniques of \cite{ji_ogle_ramsey_no_conjugacy_bound}, especially of Section~3 thereof, we can get around Assumption~\ref{assumption_pol_conjugacy_bond}, i.e., Lemma~\ref{lem_inj_prod} even holds without it. Unfortunately, this does not help us much since we need in Theorem~\ref{thm_injectivity_prod} the Assumption~\ref{assumption_pol_conjugacy_bond} to varify Assumption~\ref{assumption_pol_Dehn}.

Analogously, we can drop Assumption~\ref{item_one_lem_inj_l1_compl} in Lemma~\ref{lem_inj_l1_completion} by using \cite[Section~3]{ji_ogle_ramsey_no_conjugacy_bound}. From the arguments in the proof of Theorem~\ref{thm_injectivity_prod} below we then conclude that the map $\HH_\ast(\IC G) \to \HHocont_\ast(\ell^1_\infty G)$ is injective for every semi-hyperbolic group.

If we prove the analogous result for cyclic homology, i.e., that $\HC_\ast(\IC G) \to \HCocont_\ast(\ell^1_\infty G)$ is an injective map for every semi-hyperbolic group (this seems feasible since the results of  \cite{ji_ogle_ramsey,ji_ogle_ramsey_no_conjugacy_bound} hold equally well for cyclic homology), then we get from the respective Chern characters\footnote{And using also that $\ell^1_\infty G$ is holomorphically closed in $\ell^1 G$.} rational injectivity of the Bost assembly map $R\!K^G_*(\underline{EG}) \to K_*(\ell^1 G)$ for all semi-hyperbolic groups.

The injectivity results arising from the previous two paragraphs are different from our main theorem which needs uniform (with polynomial bounds) estimates across the different conjugacy classes, whereas the proofs of the statements from the previous two paragraphs only need certain estimates for each conjugacy class separately (which can be seen, e.g., from the lower line of the diagram in the proof of Lemma~\ref{lem_inj_l1_completion} which only involves an algebraic direct sum, i.e., only finitely many different conjugacy classes for each element of it, in the lower left corner).

Note also that using the results and techniques from \cite{JOR_final} one can give alternative proofs (even in some greater generality) of some of the results mentioned above.
\end{rem}

\section{Examples of groups satisfying the assumptions}

Semi-hyperbolic groups were introduced by Epstein et al.~\cite[Section~3.6]{epstein_et_al} under the name \emph{bicombable}. The term \emph{semi-hyperbolic} was coined by Alonso--Bridson \cite{semihyp}.

\begin{thm}\label{thm_injectivity_prod}
Let $G$ be a semi-hyperbolic group satisfying Assumption~\ref{assumption_pol_conjugacy_bond} of Lemma~\ref{lem_inj_prod}.

Then $G$ satisfies Assumption~\ref{assumption_centralizers} of that lemma, and hence \eqref{eq_inj_1} is injective for $G$.
\end{thm}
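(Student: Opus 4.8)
The plan is to verify the three ingredients of Assumption~\ref{assumption_centralizers} for the chosen representatives $h_x$: that $Z_{h_x}$ is of type $F_\infty$; that $Z_{h_x}$ carries a word-length norm making the inclusion $Z_{h_x}\subset G$ a quasi-isometric embedding with constants bounded by a fixed polynomial in $|h_x|$; and that for each $n$ the $n$-th higher-order Dehn function of $Z_{h_x}$ is bounded by a fixed two-variable polynomial $P_n(-,|h_x|)$. The only tool I would use throughout is a bounded synchronous bicombing $\sigma=(\sigma_g)_{g\in G}$ witnessing semi-hyperbolicity of $G$: the path $\sigma_g$ runs from $e$ to $g$, has length at most $\lambda|g|+c$, and the two-sided fellow-traveller inequality $d(a\sigma_u(t),b\sigma_v(t))\le k\bigl(d(a,b)+d(au,bv)\bigr)+\varepsilon$ holds, with $\lambda,c,k,\varepsilon$ depending only on $G$ and its generating set. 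I would also use Assumption~\ref{assumption_pol_conjugacy_bond}, i.e.\ the polynomial conjugacy bound $P(-,|h_x|)$.

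The geometric heart is an estimate on the coarse minimal set of $h:=h_x$. First I would observe that for $g\in Z_h$ the translate $h\sigma_g$ runs from $h$ to $hg=gh$, and since $g$ commutes with $h$ one has $d(e,h)=d(g,gh)=|h|$; so the fellow-traveller inequality gives $d(\sigma_g(t),h\sigma_g(t))\le 2k|h|+\varepsilon=:C$ for all $t$, i.e.\ $|\sigma_g(t)^{-1}h\sigma_g(t)|\le C$, so that $\sigma_g$ lies inside $M_C(h):=\{p\in G:|p^{-1}hp|\le C\}$. Next I would note that $M_C(h)$ is a finite union of right cosets $Z_h r$, one for each conjugate $h'$ of $h$ with $|h'|\le C$; choosing the conjugators $r=r_{h'}$ by means of the polynomial conjugacy bound gives $|r_{h'}|\le P(C,|h_x|)$, so that $M_C(h)$ --- and in particular every $\sigma_g$ with $g\in Z_h$ --- lies within $D:=P(2k|h_x|+\varepsilon,|h_x|)$ of $Z_h$ in $G$. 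The point is that $D$ is a polynomial in $|h_x|$.

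From this the three conditions should follow with only routine work. Discretising $\sigma_g$ at integer times produces a $1$-dense chain $e=x_0,\dots,x_m=g$ along it with $m\le\lambda|g|+c$; choosing $z_i\in Z_h$ with $d(x_i,z_i)\le D$ and $z_0=e$, $z_m=g$ writes $g$ as a product of $m$ elements of the finite set $S:=Z_h\cap B_G(2D+1)$. Taking $S$ as a generating set of $Z_h$ one gets $|g|_S\le\lambda|g|+c$, while $|g|\le(2D+1)|g|_S$ trivially, so $(Z_h,|\cdot|_S)\hookrightarrow(G,|\cdot|)$ is a quasi-isometric embedding with constants bounded by a polynomial in $|h_x|$; in particular $Z_{h_x}$ is finitely generated. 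Then I would push $\sigma$ down to $Z_h$: fix a nearest-point map $\rho$ from the $D$-neighbourhood of $Z_h$ onto $Z_h$, set $\bar\sigma_g:=\rho\circ\sigma_g$ for $g\in Z_h$, and --- using that consecutive points of $\sigma_g$ land at $S$-neighbours, that $|\cdot|_S$ and $|\cdot|$ are comparable on $Z_h$ up to the factor $2D+1$, and that fellow-travelling transfers through $\rho$ --- check that, after the obvious reparametrisation, $(\bar\sigma_g)$ is a bounded bicombing of $(Z_h,|\cdot|_S)$ whose constants are polynomials in $|h_x|$. This is in essence the argument of Alonso--Bridson \cite{semihyp} that centralizers in semi-hyperbolic groups are semi-hyperbolic, now with the constants tracked; in particular $Z_{h_x}$ is semi-hyperbolic, hence of type $F_\infty$. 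Finally I would invoke the fact that a bounded bicombing produces, in every degree $n$, a polynomial bound on the $n$-th higher-order Dehn function whose degree and coefficients are controlled by the combing constants; feeding in the $|h_x|$-polynomial bounds on the combing constants of $Z_{h_x}$ yields the required $P_n(-,|h_x|)$. With all hypotheses of Lemma~\ref{lem_inj_prod} in place, injectivity of \eqref{eq_inj_1} follows.

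The step I expect to be the real obstacle is the last one: extracting from a bounded bicombing a polynomial bound on the $n$-th order Dehn function with degree and coefficients explicitly governed by the combing constants. The qualitative fact --- combable and semi-hyperbolic groups are of type $F_\infty$ and have polynomially bounded higher-order Dehn functions --- is classical, but one needs the underlying cone / iterated-filling construction bookkept carefully enough to see that the resulting bounds grow only polynomially in the combing constants, and hence only polynomially in $|h_x|$ once the projected combing on $Z_{h_x}$ has been shown to have constants polynomial in $|h_x|$. A secondary, more bureaucratic point is verifying that $(\bar\sigma_g)$ genuinely satisfies the bicombing axioms (based at $e$, synchronous fellow-travelling after reparametrisation), which is routine but must not be skipped.
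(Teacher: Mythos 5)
Your proposal is correct and follows essentially the same route as the paper: the paper cites Bridson--Haefliger III.$\Gamma$.4.14 (quasi-convexity of centralizers, with the conjugacy bound upgrading the constant from exponential to polynomial in $|h_x|$), III.$\Gamma$.4.12 (quasi-isometric embedding and semi-hyperbolicity of the centralizers), Alonso for type $F_\infty$, and Ji--Ramsey for the polynomial higher-order Dehn functions of polynomially combable groups, whereas you unpack the first two citations by hand while tracking constants. The step you flag as the real obstacle is exactly the one the paper also leaves to a citation (Ji--Ramsey) plus the observation that the combing constants of $Z_{h_x}$ are polynomial in $|h_x|$.
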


\begin{proof}
By \cite[Proposition~III.$\Gamma$.4.14]{bridson_haefliger} we get that each centralizer $Z_{h_x}$ is quasi-convex in $G$. Going into the proof of the cited result, we see that the quasi-convexity constant depends polynomially on $|h_x|$ (here we are already using that Assumption~\ref{assumption_pol_conjugacy_bond} of Lemma~\ref{lem_inj_prod} is satisfied, because otherwise we would only get an exponential dependents).

By \cite[Proposition~III.$\Gamma$.4.12]{bridson_haefliger} we get that each centralizer $Z_{h_x}$ is quasi-isometrically embedded and semi-hyperbolic itself for a certain choice of finite generating set of it. Going again into the proof we see that the constants of the quasi-isometric embedding depend only on the constants of the semi-hyperbolicity of $G$, i.e., Assumption~\ref{assumption_pol_qi} of Lemma~\ref{lem_inj_prod} is satisfied even with a constant instead of a polynomial. The semi-hyperbolicity constants will depend polynomially on $|h_x|$.

The centralizers $Z_{h_x}$ are of type $F_\infty$ since they are combable; Alonso \cite[Theorem~2]{alonso}. That the centralizers $Z_{h_x}$ will have polynomially bounded higher-order Dehn functions was already noticed by Ji--Ramsey \cite[End of 2nd paragraph on p.~257]{ji_ramsey} since they are polynomially combable (in our case they are even quasi-geodesically combable). Since the hyperbolicity constants of the centralizers depend polynomially on $|h_x|$, the polynomial bounds on the higher-order Dehn functions will be polynomial in $|h_x|$.

Hence we have checked Assumption~\ref{assumption_centralizers} of Lemma~\ref{lem_inj_prod}, and the injectivity statement follows from it.
\end{proof}

\begin{ex}\label{ex_semihyp}
The following groups are known to be semi-hyperbolic:
\begin{enumerate}
\item hyperbolic groups (Alonso--Bridson \cite{semihyp}),
\item central extensions of hyperbolic groups (Neumann--Reeves \cite{neumann_reeves}),\footnote{Note that hyperbolicity is important here. The $3$-dimensional integral Heisenberg group is a central extension of $\IZ \times \IZ$ by~$\IZ$, but it is not quasi-geodesically combable (since it has a cubic Dehn function, see e.g.~\cite[Section~8.1]{epstein_et_al} or \cite[Remark~5.9]{gersten_heisenberg_group}) and hence can not be semi-hyperbolic.}
\item CAT(0) groups (Alonso--Bridson \cite{semihyp}),
\item systolic groups (Januszkiewicz--{\'{S}}wi{\c{a}}tkowski \cite{janus_swia}),
\item Artin groups of finite type (Charney \cite{charney}),
\item Artin groups of almost large type (Huang--Osajda \cite{huang_osajda}),
\item right-angled Artin groups (Charney--Davis \cite{RAAG_CATcube} proved they are $\mathrm{CAT}(0)$ cube groups, which were shown to be bi-automatic by Niblo--Reeves \cite{niblo_reeves}, and hence semi-hyperbolicity follows),
\item groups acting geometrically and in an order preserving way on Euclidean buildings of the type $\tilde A_n$, $\tilde B_n$ or $\tilde C_n$ (Noskov \cite{noskov} for the case of groups acting freely, and {\'{S}}wi{\c{a}}tkowski \cite{swiatkowski} for the general case).\qedhere
\end{enumerate}
\end{ex}

The above groups are known to be bi-automatic (in the case of CAT(0) groups we have to restrict to CAT(0) cube groups) and hence they have exponentially solvable conjugacy bounds. Let us compile now results from the literature about which of these groups even satisfy the much stronger Assumption~\ref{assumption_pol_conjugacy_bond} of Lemma~\ref{lem_inj_prod}.

\begin{lem}\label{lem_Examples_Assumption_1}
Let $G$ be a group from one of the following classes of groups:
\begin{enumerate}
\item hyperbolic groups,
\item central extensions of hyperbolic groups,
\item Artin groups of extra-large type,\footnote{Artin groups of extra-large type are especially of almost large type and hence are bi-automatic. An earlier proof of bi-automaticity of Artin groups of extra-large type was given by Peifer \cite{peifer}.} or
\item\label{list_RAAG} right-angled Artin groups.
\end{enumerate}
Then $G$ satisfies Assumption~\ref{assumption_pol_conjugacy_bond} of Lemma~\ref{lem_inj_prod}.
\end{lem}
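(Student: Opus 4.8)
The plan is to verify the assumption for each of the four classes by producing, in each case, a \emph{uniform} polynomial bound on the conjugacy length function: a single polynomial $\tilde P$ depending only on $G$ (and the chosen generating set) such that whenever $g, h \in G$ are conjugate there is an $r \in G$ with $h = r^{-1} g r$ and $|r| \le \tilde P(|g| + |h|)$. Once such a $\tilde P$ is available, the lemma follows formally: for a conjugacy class $x$ with minimal-length representative $h_x$ and any $h \in x$, applying the bound to the pair $(h_x, h)$ and using $|h_x| \le |h|$ produces a conjugator of length $\le \tilde P(2|h|)$. Hence $G$ has a polynomially solvable conjugacy bound at $h_x$ in the sense of Definition~\ref{defn_conjugacy_bound} with the class-independent governing polynomial $s \mapsto \tilde P(2s)$, which a fortiori has the two-variable form $P(-, |h_x|)$ demanded by Assumption~\ref{assumption_pol_conjugacy_bond} of Lemma~\ref{lem_inj_prod}.

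It then remains to supply the uniform polynomial conjugacy bound class by class. For hyperbolic groups it is classical that the conjugacy length function is linear; I would obtain this from the standard solution of the conjugacy problem via short conjugating geodesics (as in \cite{bridson_haefliger}), handling torsion elements in the usual way by reducing to elements of infinite order or by running the fellow-travelling argument on an appropriate cone-off. For right-angled Artin groups I would invoke the combinatorial normal-form theory: after cyclically reducing $g$ and $h$ one can extract a conjugator whose length is controlled linearly in $|g| + |h|$, yielding again a linear conjugacy length function; this is the substance of the algorithmic treatments of the conjugacy problem in right-angled Artin groups. For central extensions $1 \to C \to G \to Q \to 1$ of a hyperbolic group $Q$ by a finitely generated abelian group $C$, I would reduce to $Q$: conjugate elements of $G$ project to conjugate elements of $Q$, one lifts a linearly bounded conjugator from $Q$, and one then absorbs the remaining discrepancy, which lies in the centre $C$, at polynomial cost, exploiting that $C$ is at most polynomially distorted in $G$ and that $Q$ has a linear Dehn function (the relevant structural facts being intertwined with the combings constructed by Neumann--Reeves \cite{neumann_reeves}). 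For Artin groups of extra-large type I would cite the analysis of the conjugacy problem built on their (shortlex-)automatic structure with regular geodesics, where the extra-large hypothesis is precisely what delivers clean geodesic normal forms and hence a polynomial conjugator bound; biautomaticity here goes back to Peifer \cite{peifer} and is subsumed in the almost-large case of Huang--Osajda \cite{huang_osajda}.

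The hyperbolic and right-angled Artin cases are essentially textbook, so the real work lies in the other two. The main obstacle is twofold. First, for central extensions of hyperbolic groups and for extra-large-type Artin groups one must extract from the literature an \emph{explicit polynomial} conjugacy bound — not merely decidability of the conjugacy problem — by tracking the estimates through the bicombing or normal-form machinery. Second, and more subtly, one must in every case check that the bound is genuinely \emph{uniform over all conjugate pairs}, i.e.\ that the length of the conjugator is an honest polynomial function of $|g| + |h|$ with constants independent of the conjugacy class; this uniformity is exactly the strengthening of Definition~\ref{defn_conjugacy_bound} required by Assumption~\ref{assumption_pol_conjugacy_bond}, and keeping the dependence on $|h_x|$ polynomial rather than exponential (the latter being all that generic biautomaticity guarantees, cf.\ the discussion after Example~\ref{ex_semihyp}) is the reason the statement is confined to these four families.
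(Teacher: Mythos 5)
Your overall strategy --- reduce Assumption~\ref{assumption_pol_conjugacy_bond} to a uniform polynomial bound on the conjugacy length function (using $|h_x|\le|h|$ to collapse the two variables) and then assemble such bounds class by class from the literature --- is essentially the paper's proof, which cites \cite{bridson_haefliger} for hyperbolic groups, \cite{sale_extensions} for central extensions, \cite{appel_schupp,holt_rees_extra_large} for Artin groups of extra-large type, and \cite{liu_zeger,crisp_godelle_wiest} for right-angled Artin groups. Your formal reduction at the start is correct, and the hyperbolic and right-angled Artin cases are fine as described.

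There is, however, a genuine gap in your sketch of the central-extension case. After lifting a short conjugator $\bar r$ of $\bar g$ to $\bar h$ from the hyperbolic quotient $Q$, one gets $r^{-1}gr = hz$ with $z$ in the centre $C$; since $z$ is \emph{central}, no further conjugation can ``absorb'' it, so the discrepancy cannot be removed ``at polynomial cost'' in the way you indicate, and neither polynomial distortion of $C$ nor the linear Dehn function of $Q$ addresses this. The actual argument must vary the conjugator over the coset $Z_{\bar g}\bar r$ of the centralizer in $Q$: replacing $r$ by $wr$ shifts the discrepancy by the value of a homomorphism $Z_{\bar g}\to C$, the trivial discrepancy is attained because $g$ and $h$ really are conjugate in $G$, and the substantive work is to bound the length of a $\bar w$ realizing it (using that $Z_{\bar g}$ is virtually cyclic and controlling the distortion of this homomorphism). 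That is precisely the content of the result of Sale \cite{sale_extensions} that the paper invokes. A smaller issue of the same kind arises for Artin groups of extra-large type: a shortlex-automatic structure by itself only yields an exponential conjugator bound (as you note), so the linear bound is not recoverable from automaticity alone and must be taken from the specific analysis of Holt--Rees \cite{holt_rees_extra_large}.
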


\begin{proof}
The case of hyperbolic groups follows immediately from \cite[Lemma~III.$\Gamma$.2.9]{bridson_haefliger}. Note that in this case the polynomial $P(-,-)$ can be chosen to be linear in both variables and its coefficients also depend linearly on $\delta$, where $\delta$ is the hyperbolicity constant of $G$.

The extension of the results from hyperbolic groups to central extensions of them was achieved in \cite{sale_extensions}.

That Artin groups of extra-large type have a solvable conjugacy problem was shown in \cite{appel_schupp}, and it follows from the proofs given in \cite{holt_rees_extra_large} that they even satisfy Assumption~\ref{assumption_pol_conjugacy_bond} of Lemma~\ref{lem_inj_prod} (with a polynomial of degree one in both variables).\footnote{In their paper the algorithm to decide conjugacy runs in cubic time, but the final conjugating element will have a linear bound on its length.}

Right-angled Artin groups (and some of their $\mathrm{CAT}(0)$ subgroups) have linearly solvable conjugacy bounds by \cite{liu_zeger,crisp_godelle_wiest}.
\end{proof}

\begin{lem}
Let $G$ be a group which is hyperbolic relative to a finite collection of groups from the four classes in the above Lemma~\ref{lem_Examples_Assumption_1}.

Then $G$ is semi-hyperbolic and satisfies Assumption~\ref{assumption_pol_conjugacy_bond} of Lemma~\ref{lem_inj_prod}.
\end{lem}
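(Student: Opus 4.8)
The plan is to split the statement into its two independent halves --- that $G$ is semi-hyperbolic, and that $G$ satisfies Assumption~\ref{assumption_pol_conjugacy_bond} of Lemma~\ref{lem_inj_prod} --- and to deduce each of them from the corresponding property of the peripheral subgroups by means of a combination theorem for relatively hyperbolic groups.

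For semi-hyperbolicity I would observe that each of the four classes of groups in Lemma~\ref{lem_Examples_Assumption_1} consists of bi-automatic groups: hyperbolic groups are bi-automatic, central extensions of hyperbolic groups are bi-automatic by Neumann--Reeves \cite{neumann_reeves}, Artin groups of extra-large type are bi-automatic (Peifer \cite{peifer}, or because they are of almost large type \cite{huang_osajda}), and right-angled Artin groups are bi-automatic, being $\mathrm{CAT}(0)$ cube groups \cite{RAAG_CATcube} and hence bi-automatic by Niblo--Reeves \cite{niblo_reeves}. By Rebbechi's combination theorem --- a group which is hyperbolic relative to a finite collection of bi-automatic subgroups is itself bi-automatic --- the group $G$ is bi-automatic, and a bi-automatic group is semi-hyperbolic by Alonso--Bridson \cite{semihyp} (see also \cite[Chapter~III.$\Gamma$]{bridson_haefliger}).

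For Assumption~\ref{assumption_pol_conjugacy_bond} I would first note that, since $h_x$ minimizes the word length in its conjugacy class, one has $|h_x| \le |h|$ for every conjugate $h$; hence it suffices to exhibit a single polynomial $Q$ such that any two conjugate elements $g, h \in G$ admit a conjugator of length at most $Q(\max\{|g|, |h|\})$, and one then takes $P(s,t) := Q(s)$. I would obtain such a $Q$ from the combination theorem for conjugacy length functions of relatively hyperbolic groups (in the circle of ideas of Bumagin for the underlying solvability of the conjugacy problem, and of Behrstock--Drutu and Sale for the quantitative refinements): if every peripheral subgroup has a polynomially bounded conjugacy length function, then so does $G$. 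By Lemma~\ref{lem_Examples_Assumption_1} the peripheral subgroups here even have \emph{linear} conjugacy length functions, so $G$ has a polynomial one. Concretely, the argument goes through the usual dichotomy for elements of a relatively hyperbolic group: an infinite-order element is either hyperbolic, in which case conjugators are bounded polynomially in the word lengths by a Morse-lemma and quasi-geodesic-axis argument in the coned-off Cayley graph, as for hyperbolic groups; or parabolic, in which case one conjugates both $g$ and $h$ into a common peripheral subgroup $H_i$ at a cost polynomial in their word lengths (using that the peripheral subgroup carrying a given parabolic element is well-defined up to conjugacy), applies the linear conjugacy bound available inside $H_i$, and transfers the resulting conjugator back to $G$ --- which is harmless because peripheral subgroups of relatively hyperbolic groups are undistorted (Osin). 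The torsion elements fall into finitely many classes with bounded-length representatives together with parabolic ones.

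The hard part will be the quantitative bookkeeping in this last point: verifying that conjugating a parabolic element into a peripheral subgroup costs only polynomially --- and not, say, exponentially --- in the word length, tracking how the constants of the relatively hyperbolic structure enter that bound, and translating between relative length and word length at every intermediate step. One must also make sure that these estimates are uniform over the (in general infinitely many) conjugacy classes, since Assumption~\ref{assumption_pol_conjugacy_bond} demands a single two-variable polynomial. Once semi-hyperbolicity and the polynomial conjugacy bound are established, Theorem~\ref{thm_injectivity_prod} immediately gives that \eqref{eq_inj_1} is injective for $G$, which is case~(5) of the Main Theorem.
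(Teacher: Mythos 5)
Your first half follows the same route as the paper (Rebbechi's combination theorem for bi-automaticity, then bi-automatic $\Rightarrow$ semi-hyperbolic), but you have dropped a hypothesis that the paper has to address explicitly: Rebbechi's theorem \cite[Theorem~9.1]{rebbechi} requires the peripheral subgroups to be bi-automatic \emph{with prefix-closed normal forms}, not merely bi-automatic. The paper's proof therefore contains the additional (and non-vacuous) step of inspecting the bi-automaticity proofs for each of the four classes in Lemma~\ref{lem_Examples_Assumption_1} to check that the bi-automatic structures can be chosen prefix-closed. As you state the combination theorem, this verification is missing, so your argument for semi-hyperbolicity has a genuine gap --- easily repairable, but a gap.

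For the conjugacy bound you take a genuinely different route: the paper simply cites \cite[Theorem~2.2.10]{ji_ogle_ramsey}, which establishes Assumption~\ref{assumption_pol_conjugacy_bond} for groups hyperbolic relative to peripherals with polynomially solvable conjugacy bounds, whereas you propose to reprove such a combination statement from scratch via the hyperbolic/parabolic dichotomy and quantitative conjugacy-length estimates in the spirit of Bumagin, Behrstock--Drutu and Sale. Your reduction to a single polynomial $Q(\max\{|g|,|h|\})$ is fine (since $|h_x|\le|h|$), and the outline is plausible, but the step you yourself flag as hard --- showing that conjugating a parabolic element into a peripheral subgroup costs only polynomially in word length, uniformly over all conjugacy classes, and controlling the passage between relative and absolute length --- is precisely the content of the cited theorem and is not established by your sketch. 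So as written the second half is an honest programme rather than a proof; if you do not want to carry out that bookkeeping, you should cite the Ji--Ogle--Ramsey result (or an equivalent quantitative combination theorem) as the paper does.
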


\begin{proof}
Building on the thesis of Farb \cite{farb_thesis}, Rebbechi \cite[Theorem~9.1]{rebbechi} showed that if $G$ is hyperbolic relative to a finite collection of bi-automatic subgroups with prefix-closed normal forms, then $G$ itself is bi-automatic. Now all the groups stated in Lemma~\ref{lem_Examples_Assumption_1} are bi-automatic, and an inspection of the corresponding proofs reveals that all these bi-automatic structures can be chosen to have prefix-closed normal forms. Hence $G$ is bi-automatic and therefore semi-hyperbolic.

Assumption~\ref{assumption_pol_conjugacy_bond} of Lemma~\ref{lem_inj_prod} is proven in \cite[Theorem~2.2.10]{ji_ogle_ramsey}.
\end{proof}

\begin{lem}
Let $G$ be a CAT(0) group. Then it satisfies Assumption~\ref{assumption_pol_conjugacy_bond} of Lemma~\ref{lem_inj_prod} for its conjugacy classes of finite order elements.\footnote{It seems that for CAT(0) groups a polynomial bound on the conjugacy problem for elements of infinite order is currently not known. But in the case of groups acting on special CAT(0) cube complexes there is some recent progress \cite[Theorem~B]{abbott_behrstock}.}
\end{lem}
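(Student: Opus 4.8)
The plan is to exploit the geometry of a proper CAT(0) space $X$ on which $G$ acts geometrically (properly discontinuously and cocompactly by isometries), together with the fact that finite-order isometries of a complete CAT(0) space have fixed points. Fix a basepoint $p\in X$, so that $\gamma\mapsto\gamma\cdot p$ is a quasi-isometry $(G,|{-}|)\to(X,\dist)$ with constants $(\lambda,\epsilon)$, say. First I would reduce the problem: by \cite{bridson_haefliger} a group acting geometrically on a CAT(0) space has only finitely many conjugacy classes of finite subgroups, hence only finitely many conjugacy classes of finite-order elements and a uniform upper bound $N$ on their orders. It therefore suffices to fix one finite-order conjugacy class $x$ with representative $h_x=:g$ and to produce, for every $h\in x$, a conjugator $r$ with $h=r^{-1}gr$ and $|r|$ bounded by an \emph{affine} function of $|h|$ (whose coefficients may depend on the finitely many classes); taking the worst coefficients then yields the single polynomial $P(-,-)$ required in Assumption~\ref{assumption_pol_conjugacy_bond}.

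Next I would record two elementary CAT(0) facts about $g$. By the Bruhat--Tits fixed point theorem \cite{bridson_haefliger} the set $\operatorname{Fix}(g):=\{y\in X:gy=y\}$ is nonempty, closed and convex. Moreover, for any $y\in X$ the circumcenter of the finite orbit $\{g^iy\}$ lies in $\operatorname{Fix}(g)$ (by uniqueness of circumcenters in CAT(0) spaces) and is at distance at most $(N-1)\dist(y,gy)$ from $y$, since the circumradius of a set is at most its diameter and $\dist(g^iy,g^jy)=\dist(y,g^{i-j}y)\le(N-1)\dist(y,gy)$. Taking $y=p$ gives a point $x_g\in\operatorname{Fix}(g)$ with $\dist(p,x_g)$ bounded by a constant (recall $|g|$ is bounded). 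I would also use that nearest-point projection $\pi\colon X\to\operatorname{Fix}(g)$ is $1$-Lipschitz and $Z_g$-equivariant.

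The geometric heart of the argument, and the step I expect to be the main obstacle, is to show that $Z_g$ acts on $\operatorname{Fix}(g)$ with a fundamental domain of controlled size; concretely, that there is a constant $\Delta$ (depending only on the finitely many torsion classes) such that for any $y_1,y_2\in\operatorname{Fix}(g)$ there is $z\in Z_g$ with $\dist(zy_1,y_2)\le\Delta$. I would prove this by pigeonhole: given $w\in\operatorname{Fix}(g)$, cocompactness of the $G$-action gives $\gamma\in G$ with $\dist(w,\gamma p)\le R_0$; since $g$ fixes $w$, applying $g$ also bounds $\dist(w,g\gamma p)$, hence $\dist(p,\gamma^{-1}g\gamma\cdot p)\le2R_0$, so by proper discontinuity $\gamma^{-1}g\gamma$ lies in the finite set $F:=\{\delta\in G:\dist(p,\delta p)\le2R_0\}$. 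Writing $f:=\gamma^{-1}g\gamma\in F$, all solutions of $\delta^{-1}g\delta=f$ form a single coset $Z_g\gamma_f$, and here is the point: since $g$ ranges over a fixed finite set of class representatives and $f$ over the fixed finite set $F$, the representatives $\gamma_f$ can be chosen from a fixed finite set, so $\dist(p,\gamma_f p)$ is bounded. Projecting, $q_f:=\pi(\gamma_f p)\in\operatorname{Fix}(g)$ is one of finitely many fixed points, and equivariance of $\pi$ gives $\dist(w,zq_f)\le R_0+\dist(\gamma_f p,q_f)$ for a suitable $z\in Z_g$; as the $q_f$ form a fixed finite set, this yields the claimed $\Delta$. (Alternatively, one may cite that the centralizer of an elliptic element of a group acting geometrically on a CAT(0) space acts geometrically on its fixed-point set.)

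Finally I would assemble the conjugator. Given $h\in x$, pick any $\gamma_0$ with $\gamma_0^{-1}g\gamma_0=h$; the identity $g\gamma_0=\gamma_0 h$ gives $\dist(\gamma_0 p,g\gamma_0 p)=\dist(p,hp)\le\lambda|h|+\epsilon$, so by the circumcenter estimate $q:=\pi(\gamma_0 p)\in\operatorname{Fix}(g)$ satisfies $\dist(\gamma_0 p,q)\le(N-1)(\lambda|h|+\epsilon)$. By the previous step choose $z\in Z_g$ with $\dist(zq,x_g)\le\Delta$ and set $r:=z\gamma_0$; then $r^{-1}gr=\gamma_0^{-1}z^{-1}gz\gamma_0=\gamma_0^{-1}g\gamma_0=h$, and, using $Z_g$-equivariance,
\[\dist(p,r\cdot p)=\dist(p,z\gamma_0 p)\le\dist(p,x_g)+\dist(x_g,zq)+\dist(zq,z\gamma_0 p)=\dist(p,x_g)+\dist(x_g,zq)+\dist(q,\gamma_0 p),\]
which is affine in $|h|$. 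Transporting this bound back through the orbit quasi-isometry bounds $|r|$ by an affine function of $|h|$, as needed. Beyond the cocompactness step, the only thing to watch is that every constant which genuinely depends on the chosen torsion class is harmless because there are only finitely many such classes; all remaining manipulations are routine bookkeeping with the orbit quasi-isometry and the facts recalled above.
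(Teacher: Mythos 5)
Your proof is correct and amounts to an unpacking of exactly the result the paper invokes: the paper's proof is a one-line citation of \cite[Corollary~III.$\Gamma$.1.14]{bridson_haefliger}, whose underlying argument (nonempty convex fixed-point sets of finite-order isometries, the circumcenter and nearest-point-projection estimates, cocompactness of the centralizer action on the fixed-point set via the pigeonhole on short conjugates, and finiteness of the torsion conjugacy classes) is precisely what you reconstruct. Both routes give a conjugator bound linear in $|h|$, with all dependence on the chosen class absorbed into constants because there are only finitely many conjugacy classes of finite-order elements, which is what Assumption~\ref{assumption_pol_conjugacy_bond} requires here.
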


\begin{proof}
Follows immediately from \cite[Corollary~III.$\Gamma$.1.14]{bridson_haefliger}. Note that as for hyperbolic groups the polynomial can be taken to be linear in both variables.
\end{proof}

\bibliography{./Bibliography_Homology_RD_algebras}
\bibliographystyle{amsalpha}

\end{document}